\documentclass{article}

\usepackage{amsfonts,amssymb}

\usepackage{pinlabel}
\input{epsf.tex}
\usepackage{latexsym,amsfonts,amssymb,verbatim,mathrsfs,amsthm}
\usepackage{amsmath,amsthm,amssymb,latexsym,graphics,textcomp,graphicx}
\usepackage{eucal,eufrak}
\usepackage{colonequals}
\usepackage{graphicx}
\usepackage{url}
\input{xy}
\xyoption{all}

\theoremstyle{plain}
\newtheorem{theorem}{Theorem}[section]
\newtheorem{lemma}[theorem]{Lemma}
\newtheorem{corollary}[theorem]{Corollary}
\newtheorem*{theoremmain}{Main Theorem}

\newtheorem{conjecture}[theorem]{Conjecture}

\newtheorem{proposition}[theorem]{Proposition}

\newtheorem*{claim}{Claim}

\theoremstyle{definition}

\newtheorem{question}{Question}

\newcommand{\supp}{\mathrm{supp}}
\newcommand{\cone}{\mathrm {cone}}

\newcommand{\curr}{\mathrm{Curr}}
\newcommand{\Flat}{\mathrm{Flat}}

\newcommand{\R}{\mathbb{R}}
\newcommand{\Z}{\mathbb{Z}}

\newcommand{\Ch}{\mathrm{Chain}}

\newcommand{\G}{\mathcal{G}}
\newcommand{\CAT}{\mathrm{CAT}}

\begin{document}

\title{Marked length spectral rigidity for flat metrics.}
\author{Anja Bankovic and Christopher J Leininger\thanks{The second author was partially supported by the NSF grant DMS-1207183}}

\maketitle

\textbf{Abstract}:  In this paper we prove that the space of flat metrics (nonpositively curved Euclidean cone metrics) on a closed, oriented surface is marked length spectrally rigid.  In other words, two flat metrics assigning the same lengths to all closed curves differ by an isometry isotopic to the identity.

\section{Introduction}

Let $S$ be a closed, orientable surface and  $\mathfrak{M}(S)$ a set of metrics on $S$, defined up to isometry isotopic to the identity.  For $m\in\mathfrak{M}(S)$, we denote the {\em marked length spectrum of $m$} by
\[ \Lambda(m)=\{\ell_m(\gamma)\}_{\gamma\in\mathcal{C}(S)},\] where $\mathcal{C}(S)$ denotes the set of homotopy classes of non-null homotopic closed curves on $S$ and $\ell_m(\gamma)$ the length of a minimal $m$--geodesic representative of $\gamma$. We say that $\mathfrak{M}(S)$ is {\em spectrally rigid} if $m \mapsto \Lambda(m)$ is injective on $\mathfrak M(S)$.  Let $\Flat(S)$ denote the space of nonpositively curved Euclidean cone metrics.  In this paper we prove the following

\begin{theoremmain} \label{T:main1}
If $\varphi_1,\varphi_2 \in \Flat(S)$ and $\Lambda(\varphi_1) = \Lambda(\varphi_2)$, then $\varphi_1 = \varphi_2$.
\end{theoremmain}

The first results on spectral rigidity for surfaces are due to Fricke and Klein \cite{FrickeKlein}, who showed that the Teichm{\"u}ller space of Riemannian metrics with constant curvature $-1$ is spectrally rigid.  Otal \cite{otal:length} generalized this and showed that set of all negatively curved Riemannian metrics is spectrally rigid (see also Croke \cite{croke}). This was further generalized in two directions, first by Croke-Fathi-Feldman \cite{Croke:TM} who proved that the space of non--positively curved Riemannian metrics is spectrally rigid, and second by Hersonsky-Paulin \cite{hersonsky:OT} who showed that negatively curved {\em cone metrics} are spectrally rigid.  Frazier shows in \cite{Frazier:LS} that $\Lambda(\varphi)$ distinguishes metrics $\varphi \in \Flat(S)$ from negatively curved Riemannian and negatively curved cone metrics (and in fact, from nonpositively curved Riemannian metrics).   Duchin, Leininger, Rafi \cite{rafi:LD} showed that the subset of metrics in $\Flat(S)$ coming from quadratic differentials is spectrally rigid. 

To prove the Main Theorem, we follow \cite{otal:length,Croke:TM,hersonsky:OT}, associating the {\em Liouville geodesic current} $L_\varphi$ to each metric $\varphi \in \Flat(S)$, which has the property that for every closed curve, its length is calculated via Bonahon's intersection number with $L_\varphi$ \cite{bonahon:TC}.  Appealing to a result of Otal \cite{otal:length}, recorded as Theorem~\ref{theorem:Otal} here, reduces the proof to proving that two metrics with the same Liouville current are equivalent.   Our analysis diverges at this point as we focus almost exclusively on the {\em support} of the measure $L_\varphi$.  The very special behavior of the support was evident in \cite{rafi:LD}, and played a key role in \cite{Frazier:LS}.   We are able to determine a great deal about the metric from the support alone, and in fact, we conjecture that up to an obvious ambiguity, the support of $L_\varphi$ determines $\varphi$; see Section~\ref{sec:questions}.

The outline of the paper is as follows. In Section~\ref{sec:EuclideanCone} we describe Euclidean cone metrics, the induced metrics in the universal cover, and prove various facts about their geodesics.  In Section~\ref{sec:currents} we define geodesic currents and the Liouville current for a flat metric.  Section~\ref{sec:chains} provides key relationships between the Liouville current, cone points in a flat metric, and distances between cone points. The proof of the Main Theorem is given in Section~\ref{sec:end}.  We end with a conjecture and question in Section~\ref{sec:questions}.

\bigskip

\noindent {\bf Acknowledgements.}  We would like to thank Stephanie Alexander for helpful suggestions and the reference to Reshetnyak's Theorem used in Proposition~\ref{prop:CAT(0)}.

\section{Euclidean cone metrics and their geodesics}
\label{sec:EuclideanCone}

\subsection{$\CAT(0)$ geometry}

A geodesic metric $\varphi$  on $S$ is called a \emph{Euclidean cone metric} if there is a finite set of points on $S$, denoted $\cone(\varphi)$, so that the following hold: 

\begin{enumerate}

\item[(i)] $\varphi$ is locally isometric to $\mathbb{R}^2$ with the Euclidean metric on $S\setminus \cone(\varphi)$, and

\item[(ii)] every point in $\cone(\varphi)$ has an $\epsilon$--neighborhood isometric to the metric space obtained by gluing together some (finite) number of sectors of $\epsilon$--balls about 0 in $\mathbb{R}^2$ by isometries.

\end{enumerate}

The points in $\cone(\varphi)$ will be called the {\em cone points} of $\varphi$. Each  $\zeta\in\cone(\varphi)$ has a well defined cone angle $\mathfrak{ang}(\zeta)$, which is equal to the sum of the angles of the sectors from (ii) above. We can extend the definition of angle on non--cone points by defining  $\mathfrak{ang}(\zeta)=2\pi$ for all $\zeta \in S\setminus \cone(\varphi)$.  

Every Euclidean cone surface $S$ has triangulation (more precisely $\Delta$--complex structure) for which the vertex set is precisely the set of cone points; see, e.g.~\cite{masur:HDN}.  By Gromov's link condition, it follows that a Euclidian cone metric $\varphi$ is {\em nonpositively curved} if and only if $\mathfrak{ang}(\zeta)> 2\pi$ for every point $\zeta\in\cone(\varphi)$; see \cite[Theorem II.5.2]{bridson:NPC}.  A nonpositively curved Euclidean cone metric will be called a {\em flat metric}.  The space of all flat metrics on $S$, up to isometry isotopic to the identity is denoted $\Flat(S)$.  We will not distinguish between a metric and its equivalence class.

We will eventually use the following to construct our isometry in the proof of the Main Theorem. 

\begin{proposition} \label{prop:CAT(0)}
Suppose $\Delta$ is a geodesic triangle in a complete, locally compact, CAT(0) Euclidean cone surface $X$ and $\Delta' \subset \mathbb R^2$ is its comparison triangle.  Then $Area(\Delta) \leq Area(\Delta')$ with equality if and only if $\Delta$ is isometric to $\Delta'$.
\end{proposition}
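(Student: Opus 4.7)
The plan is to invoke Reshetnyak's majorization theorem, which is the result alluded to in the acknowledgements. Given a rectifiable closed curve of length $L$ in a complete $\CAT(0)$ space, this theorem produces a convex planar region $D$ bounded by a curve of length $L$ together with a $1$-Lipschitz map $f\colon D \to X$ whose restriction to $\partial D$ is an arclength parametrization of the given curve. I would apply this to the closed curve $\partial \Delta$; the region $D$ is then forced to be the comparison triangle $\Delta'$ (a convex planar region bounded by three straight segments of prescribed lengths is uniquely the Euclidean triangle with those sides), and $f$ restricts to an isometry $\partial \Delta' \to \partial \Delta$ identifying corresponding vertices.

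A short topological observation shows that the image $f(\Delta')$ contains the closed filled triangle $\overline{\Delta}$: the image is compact, connected, contains the Jordan curve $\partial \Delta$, and a Brouwer-degree computation at any interior point of $\Delta$ shows that point lies in the image. Combining this with the area formula for Lipschitz planar maps and the pointwise bound $J_f \leq 1$ (a consequence of $f$ being $1$-Lipschitz) gives
\[
\mathrm{Area}(\Delta) \;\leq\; \mathcal{H}^2\bigl(f(\Delta')\bigr) \;\leq\; \int_{\Delta'} J_f \, d\mathcal{H}^2 \;\leq\; \mathrm{Area}(\Delta'),
\]
which is the desired inequality.

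For the equality case, suppose $\mathrm{Area}(\Delta) = \mathrm{Area}(\Delta')$. Every inequality above must then be an equality, forcing $J_f \equiv 1$ almost everywhere on $\Delta'$ and the multiplicity $N(f,\cdot) \equiv 1$ almost everywhere on $\Delta$. The first condition, paired with $f$ being $1$-Lipschitz, forces $df$ to be a linear isometry wherever it exists (both singular values are at most $1$ and their product equals $1$), and together with the a.e.\ injectivity this upgrades $f$ to a genuine isometry from $\Delta'$ onto $\overline{\Delta}$. In particular, no cone point can lie in the interior of $\Delta$, since a metric ball of radius $r$ about such a point $\zeta$ has area $\tfrac{1}{2}\mathfrak{ang}(\zeta)r^2 > \pi r^2$ and so cannot be the isometric image of a Euclidean disk of radius $r$. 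Hence $\Delta$ is an ordinary Euclidean triangle with the same three side lengths as $\Delta'$, and SSS congruence finishes the argument.

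The main obstacle is this rigidity step: upgrading \emph{$1$-Lipschitz with Jacobian and multiplicity both identically $1$ almost everywhere} to a genuine global isometry, when the target may contain cone singularities along which classical differential calculus fails. The cleanest route, and presumably the one the paper will take, is to invoke the rigidity statement that accompanies Reshetnyak's theorem — namely that the majorizing map is an isometry exactly when the majorizing convex region embeds isometrically into $X$ — so that the equality case reduces to verifying those hypotheses directly rather than running the area/Jacobian argument by hand through the cone points.
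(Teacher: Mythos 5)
Your proof follows essentially the same route as the paper's: both apply Reshetnyak's majorization theorem to produce a $1$-Lipschitz map from the planar comparison triangle $\Delta'$ onto a region containing $\Delta$, and both conclude from the fact that a $1$-Lipschitz map is area non-increasing, with equality only if it is an isometry. The paper simply asserts this last rigidity step (remarking that the image contains only finitely many cone points), whereas you spell out the Jacobian/multiplicity bookkeeping and correctly flag the upgrade to a genuine isometry as the delicate point --- so there is no substantive difference in approach, only in the level of detail.
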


\begin{proof} Let $T$ be the $2$--simplex bounded by $\Delta'$ in $\R^2$.   According to Reshetnyak's Majorization Theorem \cite{reshetnyak},\cite[Chapter 9.8]{Alexander:AG}, the comparison path isometry from $\Delta'$ to $\Delta$ extends to a $1$--Lipschitz map $f \colon T \to X$ into the convex hull of $\Delta$.  Therefore, we must show that $Area(f(T)) \leq Area(T)$ with equality if and only if $f$ is an isometry.  However, the $1$--Lipschitz map is area non-increasing, and it is area preserving if and only if it is an isometry.  This is obvious for smooth surfaces $X$, and follows in our slightly more general case since $f(T)$ contains at most finitely many cone points.
\end{proof}





\subsection{Spaces of geodesics}

Let $p \colon \widetilde S \to S$ denote the universal cover.  For any geodesic metric $\sigma$ on $S$, we use the same name $\sigma$ to denote the induced geodesic metric on $\widetilde S$.  Let $S^1_\infty$ denote the circle at infinity of $\widetilde S$ --- equivalently, the Gromov boundary of $\widetilde S$ --- with respect to $\sigma$.  This compactifies $\widetilde S$ to a closed disk, and the action of $\pi_1(S)$ on $\widetilde S$ extends to an action by homeomorphisms on this disk.  Any other geodesic metric $\sigma'$ on $S$ induces its own circle at infinity, but the identity on $\widetilde S$ extends to a homeomorphism between the corresponding closed disks, and so we view $S^1_\infty$ as the boundary of $\widetilde S$, independent of $\sigma$; see \cite[Chapter III.H.3]{bridson:NPC}.

Let $\mathcal G(\sigma)$ denote the space of bi-infinite $\sigma$--geodesics in $\widetilde S$.  This is given as the quotient of the space of unit speed parameterized geodesics with the compact-open topology, where we forget the parameterization.   We record the endpoints-at-infinity of any $\delta \in \G(\sigma)$ and denote it $\partial_\sigma(\delta) = \{x,y\}$.  We view this as an unordered pair of points; that is, an element of
\[ \mathcal{G}(\widetilde S) = (S^1_\infty \times S^1_\infty \setminus D)/_{(x,y) \sim (y,x)} \]
where $D$ is the diagonal, $D=\{(x,x)\, | \, x\in S^1_\infty\}$.   The function $\partial_\sigma$ is a continuous, $\pi_1(S)$--equivariant surjective map
\[ \partial_\sigma \colon \mathcal G(\sigma) \to \mathcal G(\widetilde S).\]
When $\sigma$ is a negatively curved metric, $\partial_\sigma$ is a homeomorphism, but for a general metric $\sigma$, it need not be.

\begin{proposition} \label{P:closed map}
For any geodesic metric $\sigma$, the map $\partial_\sigma$ is a closed map.
\end{proposition}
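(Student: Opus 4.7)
The plan is to verify directly that $\partial_\sigma$ sends closed sets to closed sets, working with sequences. Let $A \subset \G(\sigma)$ be closed and suppose $(x_n, y_n) \in \partial_\sigma(A)$ converges to some $(x, y) \in \G(\widetilde S)$; choose $\delta_n \in A$ with $\partial_\sigma(\delta_n) = \{x_n, y_n\}$. The goal is to exhibit $\delta \in A$ with $\partial_\sigma(\delta) = \{x, y\}$.

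Fix a basepoint $o \in \widetilde S$. For each $n$, the bi-infinite geodesic $\delta_n$ is a closed convex subset of the $\CAT(0)$ space $(\widetilde S, \sigma)$, so it has a unique point $p_n$ closest to $o$; parameterize $\delta_n$ by unit speed as $\widetilde\delta_n \colon \R \to \widetilde S$ with $\widetilde\delta_n(0) = p_n$. The key step is to show that $\{p_n\}$ lies in a bounded, hence relatively compact, subset of $\widetilde S$. I would argue by contradiction: if a subsequence satisfies $d_\sigma(o, p_n) \to \infty$, extract a further subsequence along which $p_n$ converges in the visual compactification to some $z \in S^1_\infty$. Using $\CAT(0)$ comparison with rays in $\R^2$, the two subrays of $\widetilde\delta_n$ based at $p_n$ subtend arbitrarily small angle at $o$, forcing both $x_n$ and $y_n$ to converge visually to $z$ as well --- contradicting $x \neq y$. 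I expect this boundedness step to be the main obstacle.

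With $\{p_n\}$ relatively compact, Arzel\`a--Ascoli applied to the uniformly $1$-Lipschitz family $\widetilde\delta_n$ (all passing through the compact set $\overline{\{p_n\}}$ at time $0$) yields a subsequence converging uniformly on compact subsets of $\R$ to a unit-speed map $\widetilde\delta \colon \R \to \widetilde S$. A uniform-on-compacta limit of unit-speed geodesics is again a unit-speed geodesic, so its unparameterized class $\delta$ lies in $\G(\sigma)$; the convergence $\widetilde\delta_n \to \widetilde\delta$ is, by definition of the quotient topology on $\G(\sigma)$, exactly the convergence $\delta_n \to \delta$. Since $A$ is closed, $\delta \in A$, and the continuity of $\partial_\sigma$ already recorded in the excerpt gives $\partial_\sigma(\delta) = \lim_n \partial_\sigma(\delta_n) = \{x, y\}$. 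This produces the required preimage of $(x,y)$ in $A$ and completes the proof.
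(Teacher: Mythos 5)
Your overall route is the same as the paper's: produce preimages $\delta_n\in A$, show they all pass through a fixed compact set, apply Arzel\`a--Ascoli to get a limit geodesic $\delta$, and finish with closedness of $A$ and continuity of $\partial_\sigma$. The paper compresses the boundedness step into one sentence (``since $\partial_\sigma(\delta_n)$ converges to $\{x,y\}$ with $x\neq y$, the distance from $\zeta$ to $\delta_n$ is bounded independently of $n$''), so you correctly identified where the real content lies. But your proposed justification of that step does not work as stated. Comparison with $\R^2$ cannot show that a geodesic far from $o$ subtends a small visual angle: in $\R^2$ itself, a line at distance $R$ from the origin subtends visual angle exactly $\pi$ for every $R$, and its two endpoints at infinity stay antipodal no matter how large $R$ is. Flat comparison only bounds angles \emph{from above} by their Euclidean values, which is the wrong direction here. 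What actually forces $x_n$ and $y_n$ to coalesce when $d_\sigma(o,\delta_n)\to\infty$ is Gromov hyperbolicity of $(\widetilde S,\sigma)$ (available because $\pi_1(S)$ acts geometrically and is a hyperbolic group --- this is why the paper can call $S^1_\infty$ the Gromov boundary): one has $(x_n\mid y_n)_o\geq d_\sigma(o,\delta_n)-C$, so unbounded distance to $\delta_n$ forces $\partial_\sigma(\delta_n)$ to converge to a diagonal point, contradicting $x\neq y$. Replace the flat-comparison sentence with this Gromov-product estimate (or with the visibility property of $\widetilde S$) and the argument is complete.

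Two smaller points. First, the proposition is stated for an arbitrary geodesic metric $\sigma$, while your normalization via the unique closest point $p_n$ on the closed convex set $\delta_n$ uses CAT(0); for a general $\sigma$ you should simply pick $p_n\in\delta_n$ realizing (or nearly realizing) $d_\sigma(o,\delta_n)$, which needs no convexity. Second, the remainder of your argument --- that a uniform-on-compacta limit of unit-speed geodesics through a fixed compact set is a geodesic, that this convergence is convergence in $\G(\sigma)$, and that closedness plus continuity finish the proof --- matches the paper and is fine.
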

\begin{proof}
Let $E \subset \G(\sigma)$ be a closed set, and we must show that $\partial_\sigma(E)$ is a closed set in $\G(\widetilde S)$.  Since $\mathcal G(\widetilde S)$ is metrizable, it suffices to show that if $\{\delta_n\}_{n=1}^\infty \subset E$ is a sequence such that $\partial_\sigma(\delta_n)$ converges to some $\{x,y\}$, then there is some $\delta \in E$ such that $\partial_\sigma(\delta) = \{x,y\}$.  For this, fix a point $\zeta \in \widetilde S$ and observe that since $\partial_\sigma(\delta_n)$ converges to $\{x,y\}$ (and $x \neq y$), the distance from $\zeta$ to the geodesics $\delta_n$ is bounded by some constant $R > 0$, independent of $n$.  Since the metric $\sigma$ on $\widetilde S$ is proper, the closed ball $B_R$ of radius $R> 0$ about $\zeta$ is compact.  Since the $\delta_n$ all intersect $B_R$ for all $n$, the Arzela-Ascoli Theorem implies that some subsequence $\{\delta_{n_k}\}$ converges to some geodesic $\delta$. Since $E$ is closed, $\delta \in E$, and continuity of $\partial_\sigma$ implies $\partial_\sigma(\delta) = \{x,y\}$.
\end{proof}



\subsection{Linking and betweenness} \label{sec:link-between}

Fix a hyperbolic metric $\rho$ inducing a homeomorphism $\partial_\varphi \colon \mathcal G(\rho) \to \mathcal G(\widetilde S)$.
Let $\delta_1,\delta_2 \in \mathcal G(\rho)$ be two distinct geodesics, and write $\partial_\rho (\delta_i) = \{ x_i,y_i\}$, for $i=1,2$.  If $\delta_1,\delta_2$ transversely intersect, then we say that $\{x_1,y_1\}$ and $\{x_2,y_2\}$ {\em link}.  This is equivalent to saying that the $0$--spheres $\{x_1,y_1\}$ and $\{x_2,y_2\}$ in the $1$--sphere $S^1_\infty$ are linked, meaning that $x_1,x_2,y_1,y_2$ are all distinct, and the two components of $S^1_\infty \setminus \{x_1,y_1\}$ each contain one of the points $x_2,y_2$.  The point is that intersection of $\delta_1,\delta_2$ can be determined from the image in $\partial_\rho$. We will also say that $\delta_1,\delta_2$ link.

Suppose $\delta_1,\delta_2$ are disjoint (so their endpoints do not link).  Then write $[\delta_1,\delta_2] \subset \G (\rho)$ for the set of geodesics {\em between} $\delta_1,\delta_2$.  These are precisely the geodesics that link neither of $\delta_1$ nor $\delta_2$ but do link every geodesic which is linked with both $\delta_1$ and $\delta_2$; See Figure \ref{F:linkingbetween}.  Equivalently, if we let $\partial_\sigma(\delta_i) = \{x_i,y_i\}$, $i = 1,2$, with the points appearing in $S^1_\infty$ in the counterclockwise, cyclic order $x_1,y_1,y_2,x_2$, and if we let $[a,b]$ denote the counterclockwise interval between $a$ and $b$ in $S^1_\infty$ (where $[a,b]=\{a\}$ if $a =b$), then
\[ [\delta_1,\delta_2] = \{ \delta \in \G(\rho) \mid \partial_\rho (\delta) = \{x,y\} \mbox{ with } x \in [x_2,x_1], y \in [y_1,y_2] \}. \]
The image of $[\delta_1,\delta_2]$ in $\G(\widetilde S)$ is similarly denoted
\[ \big[\{x_1,y_1\},\{x_2,y_2\} \big] = \partial_\sigma([\delta_1,\delta_2]) = \{\{x,y\} \mid x \in [x_2,x_1], y \in [y_1,y_2] \}. \]

For $\varphi \in \Flat(S)$, the endpoints of $\delta_1,\delta_2 \in \G(\varphi)$ link if and only if either $\delta_1,\delta_2$ transversely intersect once, or they share a compact segment, with $\delta_1$ crossing from one side of $\delta_2$ to the other.   In this case, we'll also say that $\delta_1,\delta_2$ link.  Betweenness for $\delta_1,\delta_2 \in \G(\varphi)$ is also defined as betweenness for the endpoints.

\begin{figure}[htb]
\centering
\includegraphics[width=4cm]{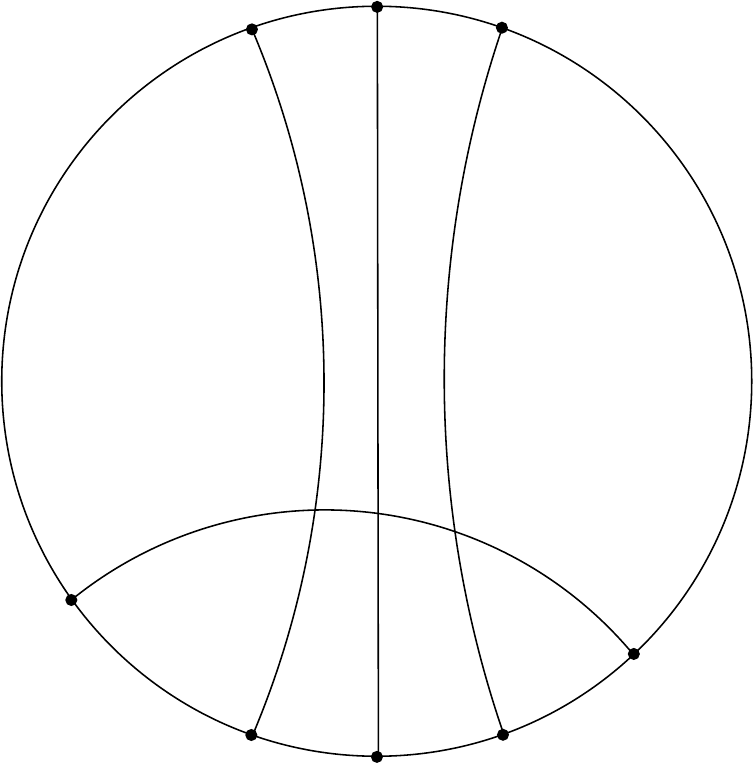}
\begin{picture}(0,0)
\put(-63,118){$y$}
\put(-63,-6){$x$}
\put(-44,116){$y_1$}
\put(-87,-2){$x_2$}
\put(-85,116){$y_2$}
\put(-47,-3){$x_1$}
\put(-78,55){$\delta_2$}
\put(-47,55){$\delta_1$}
\put(-58,57){$\delta$}
\put(-97,35){$\delta'$}
\put(-117,18){$x'$}
\put(-19,12){$y'$}
\end{picture}
\caption{$\delta_1,\delta_2$ do not link, and $\delta \in [\delta_1,\delta_2]$ since any geodesic $\delta'$ linking $\delta_1,\delta_2$ will also link $\delta$.} \label{F:linkingbetween}
\end{figure}

\subsection{Flat metric geodesics}

Let $\varphi \in \Flat(S)$ be any flat metric.   A $\varphi$--geodesic $\delta \in \G(\varphi)$ is called {\em nonsingular} if it contains no cone points.  We let $\G^\circ(\varphi) \subset \G(\varphi)$ denote the set of nonsingular geodesics, and $\G^*(\varphi) =  \overline{\G^\circ(\varphi)} \subset \G(\varphi)$.  Since there are countably many cone points of $\varphi$ in $\widetilde S$, and the set of $\varphi$--geodesics through any cone point is closed, it follows that $\G^\circ(\varphi)$ is the complement of a countable union of closed sets, and hence a Borel set.  Each $\delta \in \G(\varphi)$ is two-sided and we choose a transverse orientation so that we can refer to the positive and negative sides of $\varphi$.   At every cone point $x \in \cone(\varphi) \subset \widetilde S$, $\delta$ makes two angles, one on the positive side and one on the negative side.  Because $\delta$ is a geodesic, at every $x \in \cone(\varphi)$, both angles at $x$ must be at least $\pi$.

\begin{proposition}
If $\delta \in \G(\varphi)$ is a geodesic containing at most one cone point and making an angle $\pi$ on one side at that point, then $\delta \in \G^*(\varphi)$.
\end{proposition}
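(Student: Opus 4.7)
If $\delta$ contains no cone point, then $\delta\in\G^\circ(\varphi)\subset\G^*(\varphi)$, so I will assume $\delta$ contains exactly one cone point $x$, making angle $\pi$ on, say, the positive side, and parameterize $\delta$ by arc-length with $\delta(0)=x$. My strategy will be to approximate $\delta$ by bi-infinite nonsingular $\varphi$-geodesics obtained by sliding $\delta$ a small distance onto its positive side. The crucial geometric point is that the angle-$\pi$ condition at $x$ will force a tubular half-neighborhood of $\delta$ on the positive side to be flat, even across $x$.

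Concretely, for each $T>0$ I would consider the arc $\delta|_{[-T,T]}$, whose only cone point is $x$. Since cone points are discrete in $\widetilde S$, there is $r(T)>0$ such that no other cone point of $\varphi$ lies on the positive side of $\delta$ within distance $r(T)$ of $\delta|_{[-T,T]}$. The angle-$\pi$ condition ensures the perpendicular direction to $\delta$ on the positive side is well defined at $x$, and I would check that Fermi coordinates $(s,t)$ --- arc-length along $\delta$ together with perpendicular distance on the positive side --- give an isometry of this half-tube with the Euclidean rectangle $[-T,T]\times[0,r(T))\subset\R^2$. For $\epsilon\in(0,r(T))$ the horizontal segment $L_\epsilon=\{(s,\epsilon)\mid s\in[-T,T]\}$ is then a straight Euclidean segment, hence, by CAT(0) uniqueness of local geodesics, a $\varphi$-geodesic segment of length $2T$ containing no cone points.

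Next I would extend $L_\epsilon$ past each of its non-cone endpoints $(\pm T,\epsilon)$ in the tangent direction $\partial_s$, producing a bi-infinite $\varphi$-geodesic $\widetilde L_\epsilon$. The main obstacle will be to show that $\epsilon$ can be chosen so the resulting extension meets no further cone points, in which case $\widetilde L_\epsilon\in\G^\circ(\varphi)$. For this I would use a genericity argument on $\epsilon$: cone points of $\widetilde S$ form a countable set, and for each individual cone point $c$ the set of $\epsilon\in(0,r(T))$ for which the straight extension of $L_\epsilon$ passes through $c$ is at most countable, because in each Euclidean neighborhood the straight continuations for distinct $\epsilon$ are locally disjoint parallel lines. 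The union of these bad values is therefore at most countable, and I can choose $\epsilon_T$ in its dense complement with $\epsilon_T\to 0$ as $T\to\infty$.

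Finally, to verify $\widetilde L_{\epsilon_T}\to\delta$ in $\G(\varphi)$, I would parameterize $\widetilde L_{\epsilon_T}$ by arc-length so that in the Fermi coordinates $\widetilde L_{\epsilon_T}(s)=(s,\epsilon_T)$ for $s\in[-T,T]$ and $\delta(s)=(s,0)$; then $d_\varphi(\widetilde L_{\epsilon_T}(s),\delta(s))=\epsilon_T$ throughout $[-T,T]$. Thus for any fixed compact parameter interval $[-S,S]$ and every $T>S$, the restrictions converge uniformly on $[-S,S]$ as $\epsilon_T\to 0$, which is precisely compact-open convergence of parameterizations. Hence $\widetilde L_{\epsilon_T}\to\delta$ and $\delta\in\overline{\G^\circ(\varphi)}=\G^*(\varphi)$.
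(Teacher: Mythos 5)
Your construction is in essence the paper's proof: both slide $\delta$ onto the side where it makes angle $\pi$ at the cone point (so that a half-neighborhood there is flat), obtain a one-parameter family of parallel geodesics, and use countability of the cone points in $\widetilde S$ to extract nonsingular members converging to $\delta$. Two remarks. First, the truncate-then-extend-then-diagonalize scheme is more elaborate than necessary: the paper simply takes a short perpendicular arc $\alpha$ emanating from $\zeta$ on the positive side and, for each point of $\alpha$, the full bi-infinite geodesic through that point orthogonal to $\alpha$; this removes the second limit in $T$. Second, and more substantively, your justification of the genericity step --- that for each cone point $c$ only countably many $\epsilon$ yield an extension through $c$ ``because the straight continuations are locally disjoint parallel lines'' --- is too local to carry the argument. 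Segments that are parallel in one Euclidean chart can, after passing on opposite sides of (or around) cone points, acquire different holonomy and later cross; if cone angles less than $2\pi$ were permitted, a whole family of such parallels could focus through a single point, and nothing in your stated reason excludes this. The correct and short argument, which is the one the paper uses, is global: any two of your extensions meet the perpendicular arc at $\zeta$ orthogonally at distinct points, so if they intersected one would obtain a geodesic triangle with two right angles and a positive third angle, contradicting the angle-sum inequality in a $\CAT(0)$ space. Hence the extensions are pairwise disjoint, each cone point rules out at most one value of $\epsilon$, and the bad set is countable. With that substitution your proof is complete.
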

\begin{proof}
Suppose $\delta$ makes an angle $\pi$ on the positive side at $\zeta \in \cone(\varphi)$.  Fix a short reference geodesic arc $\alpha$ emanating from $\zeta$ on the positive side of $\delta$, orthogonal to $\delta$, meeting no other cone points.  At every point $\alpha(t)$, we consider the geodesic through $\alpha(t)$ orthogonal to $\alpha$, and hence parallel to $\delta$.  Since these are all parallel along $\alpha$ and $\varphi$ is $\CAT(0)$, these are pairwise disjoint.  At most countably many of these can meet a cone point (since there are only countably many cone points), and hence $\delta$ is a limit of nonsingular geodesics approaching it from the positive side.  Therefore $\delta \in \G^*(\varphi)$.
\end{proof}

\begin{proposition} \label{P:at most one switch}
If $\delta \in \G^*(\varphi)$, then at every cone point $\zeta \in \delta$, $\delta$ makes an angle exactly $\pi$ on one side.  Furthermore, ordering the cone points linearly along $\delta$, the side on which the angle is $\pi$ can only switch at most once.
\end{proposition}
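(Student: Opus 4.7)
The plan is to prove both assertions by contradiction using a sequence of nonsingular geodesics $\delta_n \in \G^\circ(\varphi)$ with $\delta_n \to \delta$. For the first assertion, fix $\zeta \in \cone(\varphi)$ lying on $\delta$, parameterize $\delta$ so $\delta(0)=\zeta$, and choose $\eta>0$ small enough that $B(\zeta,2\eta) \subset \widetilde S$ contains no other cone points. Suppose for contradiction that both angles $\theta_+,\theta_-$ at $\zeta$ exceed $\pi$, and (by passing to a subsequence) assume $\delta_n$ approaches $\zeta$ from a fixed side, say the positive side. The positive sector at $\zeta$ is isometric to a Euclidean sector of angle $\theta_+$ with apex at $\zeta$; in these sector coordinates $\delta(\eta)$ and $\delta(-\eta)$ correspond to $(\eta,0)$ and $(\eta\cos\theta_+,\eta\sin\theta_+)$ on the two bounding rays. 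The unit-speed nonsingular segment $\delta_n|_{[-\eta,\eta]}$ sits in this sector as a Euclidean straight segment of length $2\eta$, with endpoints converging to these two points. Since the length of a straight Euclidean segment equals the Euclidean distance between its endpoints, the limit gives $2\eta = 2\eta\sin(\theta_+/2)$, forcing $\theta_+ = \pi$ and contradicting $\theta_+ > \pi$. The same argument on the negative side forces $\theta_-=\pi$ if $\delta_n$ approaches from there. Since $\delta_n$ must approach from one side, at least one of $\theta_\pm$ equals $\pi$, and since both are $\geq \pi$, exactly one does.

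For the second assertion, suppose the $\pi$-side switches at two consecutive cone points of $\delta$: there are three consecutive cone points $\zeta_j,\zeta_{j+1},\zeta_{j+2}$ along $\delta$ whose $\pi$-sides are, without loss of generality, $+,-,+$. By Part 1 applied contrapositively, for each $i$ the nonsingular $\delta_n$ can approach $\zeta_i$ only from its $\pi$-side (otherwise we could derive $\theta=\pi$ on a side where the angle exceeds $\pi$). So $\delta_n$ is on the positive side of $\delta$ near $\zeta_j$ and $\zeta_{j+2}$ but on the negative side near $\zeta_{j+1}$; hence $\delta_n$ crosses $\delta$ transversely at some point $x_n$ on the arc from $\zeta_j$ to $\zeta_{j+1}$ and another $y_n$ on the arc from $\zeta_{j+1}$ to $\zeta_{j+2}$. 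The key observation is that, since the negative angle at $\zeta_{j+1}$ is exactly $\pi$, the negative-side tubular $\epsilon$-neighborhood of $\delta|_{[x_n,y_n]}$ is isometric, for $\epsilon$ small and $n$ large, to a Euclidean rectangle with $\delta$ appearing as one straight boundary edge; the two half-strips on either side of $\zeta_{j+1}$ glue Euclideanly precisely because the negative angle at $\zeta_{j+1}$ is $\pi$, so there is no angular defect on the negative side. The arc $\delta_n|_{[x_n,y_n]}$ lies entirely in this rectangle as a nonsingular straight Euclidean segment with both endpoints on the boundary $\delta$; but two straight lines in a Euclidean rectangle meeting at two points must coincide, so $\delta_n = \delta$ on $[x_n,y_n]$. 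This segment contains the cone point $\zeta_{j+1}$, contradicting $\delta_n \in \G^\circ(\varphi)$.

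The main obstacle lies in the rigor of Part 1 when $\theta_+>\pi$: the positive sector is non-convex, so the Euclidean chord from $(\eta,0)$ to $(\eta\cos\theta_+,\eta\sin\theta_+)$ exits the sector, and one must justify that $\delta_n|_{[-\eta,\eta]}$ embeds in the sector well enough for the length comparison. The cleanest way around this is to handle separately the case that $\delta_n$ exits the positive sector by crossing $\delta$ at some $t_n^* \in [-\eta,\eta]$: applying the length-equals-Euclidean-distance identity to the maximal initial sub-segment $\delta_n|_{[-\eta,t_n^*]}$ (which lies in the positive sector up to its first crossing) yields in the limit the equation $(t^*+\eta)^2 = \eta^2 + t^{*2} - 2\eta t^* \cos\theta_+$, forcing $\cos\theta_+ = -1$ whenever $t^* \neq 0$; the boundary case $t_n^* \to 0$ is handled by the analogous argument applied to the second sub-segment on the negative side. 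Equivalently, one observes that a Euclidean straight line subtends at most angle $\pi$ at any fixed point, so $\delta_n$ cannot be simultaneously close to both directions of $\delta$ at $\zeta$ on a side where those directions make angle $\theta_+>\pi$.
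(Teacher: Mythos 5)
Your strategy is the same as the paper's: approximate $\delta$ by nonsingular geodesics $\delta_n$, show the side from which $\delta_n$ comes near a cone point must make angle exactly $\pi$, and rule out a double switch by forcing $\delta_n$ to cross $\delta$ twice. The paper's proof is only a sketch (a figure for the first claim, a one-line $\CAT(0)$ appeal for the second), so the added Euclidean detail is the real content of your write-up, and it is mostly sound; but there is one sub-case your argument as structured does not close. In Part 1 you rightly note that when $\theta_+>\pi$ the sector is non-convex and the chord identity must be applied to the pieces of $\delta_n$ on either side of its crossing with $\delta$. Your patch fails precisely in the degenerate case $t_n^*\to 0$: there the first sub-segment has length $t_n^*+\eta\to\eta$ and endpoints converging to the apex and to a point at distance $\eta$ from it, so ``length equals chord length'' reads $\eta=\eta$; the ``analogous argument on the second sub-segment'' gives $\eta=\eta$ again, and no constraint on $\theta_+$ or $\theta_-$ results. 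The observation you relegate to a closing aside --- a geodesic segment missing the apex develops to a straight line and hence subtends total angle strictly less than $\pi$ at the apex, while its endpoints converge to points whose angular separation in the cone is $\min(\theta_+,\theta_-)>\pi$ --- handles all cases (no crossing, one crossing, crossing converging to the apex) uniformly and should be promoted to the main argument.

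In Part 2, the flat-rectangle construction is both unnecessary and slightly delicate: if $x_n$ accumulates at $\zeta_j$, the width $\epsilon$ for which the negative-side neighborhood of $\delta|_{[x_n,y_n]}$ is a Euclidean rectangle shrinks with $n$, and nothing guarantees $\delta_n|_{[x_n,y_n]}$ stays inside it. You do not need it: once $\delta_n$ meets $\delta$ at two points separated along $\delta$ by $\zeta_{j+1}$, uniqueness of geodesics in a $\CAT(0)$ space forces $\delta_n$ and $\delta$ to agree on the segment between them, which contains $\zeta_{j+1}$ and contradicts nonsingularity --- exactly the paper's conclusion. Finally, your ``exactly one side'' assertion needs the cone angle to exceed $2\pi$ (so that $\theta_++\theta_->2\pi$), not merely $\theta_\pm\ge\pi$; this holds since $\zeta$ is a cone point of a nonpositively curved metric, but the proposition only claims angle $\pi$ on one side, so nothing is lost.
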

\begin{proof}
For the first claim, let $\delta_n \in \G^\circ(\varphi)$ be a sequence converging to $\delta$, and $\zeta \in \cone(\varphi)$ contained in $\delta$.  Since $\delta_n$ contains no cone points, $\zeta \not\in \delta_n$, and therefore up to subsequence $\delta_n$ approaches $\delta$ from either its positive or negative side {\em near $\zeta$}.  In this case, the cone angle of $\delta$ at $\zeta$ on the side of approach must be $\pi$.  See Figure~\ref{F:limit sides}.

If $\zeta_1,\zeta_2,\zeta_3$ are consecutive cone points along $\delta$, and the angle $\pi$ is on the positive side at $\zeta_1$ and $\zeta_3$ and on the negative side at $\zeta_2$, say, then it follows that for $n$ sufficiently large, an approximating geodesic $\delta_n \in \G^\circ(\varphi)$ for $\delta$ must be on the positive side near $\zeta_1$ and $\zeta_3$ and the negative side near $\zeta_2$; see Figure~\ref{F:limit sides}.  But then $\delta_n$ crosses $\delta$ twice, which is impossible since $\varphi$ is $\CAT(0)$.
\end{proof}

\begin{figure}[htb]
\centering
\includegraphics[width=11cm]{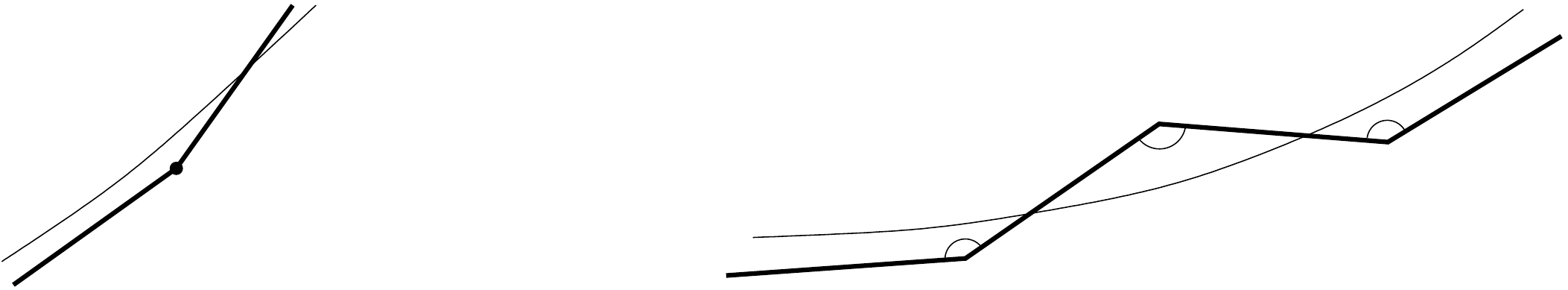}
\begin{picture}(0,0)
\put(-275,20){$\zeta$}
\put(-293,0){$\delta$}
\put(-318,20){$\delta_n$}
\put(-123,-2){$\zeta_1$}
\put(-88,37){$\zeta_2$}
\put(-38,22){$\zeta_3$}
\put(-163,16){$\delta_n$}
\put(-178,-2){$\delta$}
\end{picture}
\caption{Approximating $\delta$ by $\delta_n \in \G^\circ(\varphi)$.  {\bf Left}: Side of $\delta$ near $\zeta$ containing $\delta_n$ must make angle $\pi$.  {\bf Right}:  The points $\zeta_1,\zeta_2,\zeta_3$ have cone angle $\pi$ on the side indicated, switching sides from $\zeta_1$ to $\zeta_2$ and again from $\zeta_2$ to $\zeta_3$.} \label{F:limit sides}
\end{figure}

If we define $\G^2(\varphi) \subset \G^*(\varphi)$ to be the set of geodesics in $\G^*(\varphi)$ containing at least two cone points, we have the following corollary of the previous proposition.
\begin{corollary} \label{C:countable}
The set $\G^2(\varphi)$ is countable.
\end{corollary}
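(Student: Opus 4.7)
The plan is to show that $\G^2(\varphi)$ embeds into a manifestly countable parameter space. Since $\cone(\varphi) \subset S$ is finite and $\pi_1(S)$ is countable, the set $C := \cone(\varphi) \cap \widetilde S$ of cone points in the universal cover is countable. The key geometric observation, used throughout, is this: at any cone point $\zeta$ (with $\mathfrak{ang}(\zeta) > 2\pi$), an incoming direction together with a choice of one of the two sides uniquely determines an outgoing direction satisfying ``angle exactly $\pi$ on the chosen side''; the other angle, $\mathfrak{ang}(\zeta) - \pi > \pi$, then satisfies the geodesic condition automatically.

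By Proposition~\ref{P:at most one switch}, each $\delta \in \G^2(\varphi)$ either has the angle $\pi$ on the same side $s$ at every cone point (no switch), or has exactly one switch between two consecutive cone points $\zeta^*, \zeta^{**}$ of sides $a$ and $b \neq a$. In the no-switch case, I pick any two cone points $\zeta_1, \zeta_2$ on $\delta$: the segment $[\zeta_1, \zeta_2]$ is unique by the $\CAT(0)$ property, and the outgoing direction at $\zeta_1$ away from $\zeta_2$ is forced by $s$; the resulting ray extends to the next cone point (if any), where the same rule forces its continuation, and so on in both directions. Thus $\delta$ is determined by the triple $(\zeta_1, \zeta_2, s) \in C \times C \times \{+, -\}$. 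In the one-switch case, the analogous propagation applied to the leftward extension from $\zeta^*$ (using side $a$) and the rightward extension from $\zeta^{**}$ (using side $b$) shows that $\delta$ is determined by the tuple $(\zeta^*, \zeta^{**}, a, b) \in C \times C \times \{+, -\} \times \{+, -\}$. Both parameter spaces are countable, so $\G^2(\varphi)$ is countable.

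The main point to verify is the consistency of the ``side'' labels as one propagates from one cone point to the next; this is handled by fixing a transverse orientation of $\delta$, which is canonical once a side is chosen at the initial cone point. I do not expect any serious obstacle: the essential content is already contained in Proposition~\ref{P:at most one switch}, and this corollary is largely a counting exercise that converts the ``at most one switch'' structural constraint into a finite piece of combinatorial data attached to a countable pair of cone points.
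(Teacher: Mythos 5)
Your proof is correct and is essentially the paper's argument: both rest on the countability of pairs of cone points in $\widetilde S$ together with Proposition~\ref{P:at most one switch} to bound the number of geodesics of $\G^2(\varphi)$ attached to a given pair. Your version is slightly more explicit in that, by choosing the pair adaptively (the switch pair, or any pair in the no-switch case), you get only finitely many side-labels per pair rather than the paper's ``countably many geodesics containing a given segment,'' but the substance is the same.
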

\begin{proof}
Any $\varphi$--geodesic containing more than one cone point must a contain a geodesic segment connecting a pair of cone points.  Furthermore, by Proposition~\ref{P:at most one switch}, every geodesic in $\G^*(\varphi)$ switches the sides with angle $\pi$ at most once.  Therefore, there are at most countably many geodesics in $\G^*(\varphi)$ containing a given geodesic segment between cone points.  On the other hand, there are countably many geodesic segments between cone points.  Thus $\G^2(\varphi)$ is a countable union of countable sets, hence countable.
\end{proof}

We are also interested in the $\partial_\varphi$--images of the subsets defined above, and we denote these
\[ \G^2_\varphi(\widetilde S) = \partial_\varphi(\G^2(\varphi)) \quad \quad \G^\circ_\varphi(\widetilde S) = \partial_\varphi(\G^\circ(\varphi)) \quad \quad  \G_\varphi^*(\widetilde S) = \partial_\varphi(\G^*(\varphi)). \]

\subsection{Asymptotic geodesics}

Geodesics in $\G^*(\varphi)$ can only be asymptotic in one or both directions in particularly special ways.  We will exploit this information, and so we describe this precisely.

A {\em $\varphi$--flat strip} in $\widetilde S$ is an isometric embedding $F \colon \R \times [a,b] \to \widetilde S$ for some $[a,b]$, and a {\em $\varphi$--flat half-strip} in $\widetilde S$ is an isometric embedding of $F \colon [0,\infty) \times [a,b] \to \widetilde S$ for some $[a,b]$.  In both cases we required  $a \neq b$.

The failure of injectivity of $\partial_\varphi$ is entirely accounted for by $\varphi$--flat strips.  More precisely, the fiber of $\partial_\varphi$ is either a single geodesic, or else a $\varphi$--flat strip; see \cite[Theorem II.2.13]{bridson:NPC}.  In particular, for any point $\{x,y\} \in \G(\widetilde S)$, $\partial_\varphi^{-1}(\{x,y\})$ is either a point or an arc.

We say that two geodesics $\delta_1,\delta_2 \in \G(\varphi)$ are {\em $\varphi$--cone point asymptotic} (in one direction) if there are rays $\delta_i^+ \subset \delta_i$, $i=1,2$ so that $
\delta_1^+ = \delta_2^+$.  If this happens, there is a maximal such ray in each which necessarily emanates from a cone point.

\begin{lemma} \label{L:ray-asympt-class}
Suppose $\delta_1,\delta_2 \in \G^*(\varphi) \setminus \G^2(\varphi)$ are asymptotic one one direction.  Then either $\delta_1,\delta_2$ are $\varphi$--cone point asymptotic, or else they are separated by a $\varphi$--flat half-strip.
\end{lemma}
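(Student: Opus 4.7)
Let $x \in S^1_\infty$ be the shared endpoint, and let $\delta_i^+ \subset \delta_i$ be the forward ray going to $x$. My plan is to classify the intersection $\delta_1^+ \cap \delta_2^+$ using $\CAT(0)$ geometry, then handle the resulting two cases separately.

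Since $\widetilde S$ with the metric $\varphi$ is a complete $\CAT(0)$ space, for any $p \in \widetilde S$ there is a unique geodesic ray from $p$ to $x$. In particular, if $p \in \delta_1^+ \cap \delta_2^+$, then the forward rays from $p$ along $\delta_1$ and $\delta_2$ must agree, so $\delta_1^+ \cap \delta_2^+$ contains the entire subray from $p$ to $x$. Thus the intersection is either empty or an entire forward subray. Suppose first it is a nonempty subray, and let $\eta$ be the maximal such, starting at a point $p$. I claim $p$ is a cone point: otherwise $p$ has a Euclidean neighborhood in which a local geodesic is determined by its tangent line, so the shared forward direction of $\delta_1,\delta_2$ along $\eta$ forces their backward extensions at $p$ to coincide as well, extending $\eta$ strictly past $p$ and contradicting maximality. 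Hence $p$ is a cone point and $\delta_1,\delta_2$ are $\varphi$--cone point asymptotic.

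Suppose instead that $\delta_1^+ \cap \delta_2^+ = \emptyset$. Since $\delta_i \in \G^*(\varphi) \setminus \G^2(\varphi)$ each has at most one cone point; choose $T$ larger than the parameter of any such point on either geodesic, so that $\delta_i^+|_{[T,\infty)}$ are disjoint nonsingular rays asymptotic to $x$. Fix a geodesic segment $\sigma \colon [0,\epsilon] \to \widetilde S$ perpendicular to $\delta_1$ at $\delta_1(T)=\sigma(0)$, oriented into the side of $\delta_1$ containing $\delta_2$, with $\epsilon > 0$ small enough that $\sigma$ stays inside a Euclidean chart about $\delta_1(T)$ and strictly on the $\delta_1$-side of $\delta_2$. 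For each $u \in [0,\epsilon]$ let $\gamma_u$ be the forward geodesic ray through $\sigma(u)$ perpendicular to $\sigma$, which in the chart at $\sigma(0)$ is a line parallel to $\delta_1$. By $\CAT(0)$ the rays $\gamma_u$ remain pairwise disjoint; each is asymptotic to $x$ since it is sandwiched between its neighbors. Provided no $\gamma_u$ meets a cone point for $t \geq T$, the map $(t,u) \mapsto \gamma_u(t)$ isometrically embeds the Euclidean half-strip $[T,\infty) \times [0,\epsilon]$ into $\widetilde S$, giving a flat half-strip that separates $\delta_1$ from $\delta_2$.

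The main obstacle is to guarantee a positive $\epsilon$ for which no $\gamma_u$ with $u \in (0,\epsilon]$ meets a cone point in the forward direction. I would let $\epsilon^*$ be the supremum of such admissible widths and rule out $\epsilon^* = 0$: if cone points accumulated transversely onto the nonsingular ray $\delta_1^+|_{[T,\infty)}$, a suitable compactness argument in the compact quotient $S$ (translating cone points close to $\delta_1^+$ into a fundamental domain and extracting a subsequential limit via Arzel\`a--Ascoli) would produce a cone point lying on $\delta_1$ past parameter $T$, contradicting both the choice of $T$ and the assumption $\delta_1 \notin \G^2(\varphi)$.
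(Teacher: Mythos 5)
Your first case (if the forward rays intersect, the maximal common subray starts at a cone point, so $\delta_1,\delta_2$ are $\varphi$--cone point asymptotic) is correct and fleshes out what the paper treats implicitly. The gap is in the second case, at precisely the step you flag as the main obstacle. The statement you propose to prove there --- that a nonsingular ray $\delta_1^+|_{[T,\infty)}$ admits a uniform width $\epsilon>0$ with no cone points in the swept band --- is \emph{false} in general: take $\varphi$ to come from a translation surface and $\delta_1$ a lift of a dense nonsingular trajectory; then cone points of $\widetilde S$ come arbitrarily close to $\delta_1^+$ on both sides. Your compactness argument does not rescue this, because after translating the offending cone points $\zeta_n$ into a fundamental domain and applying Arzel\`a--Ascoli you obtain a cone point lying on a \emph{limit of translates} $g_n\delta_1$, not on $\delta_1$ itself, so there is no contradiction with $\delta_1\notin\G^2(\varphi)$. (A minor additional slip: the outermost ray $\gamma_\epsilon$ is not ``sandwiched between its neighbors,'' so its asymptoticity to $x$ is also unjustified.)

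The missing ingredient is that cone points need only be excluded from the region \emph{between} the two mutually asymptotic rays, and it is the asymptoticity to $\delta_2$ --- which your band construction never uses after the initial choice of side --- that does the work: a cone point in that region has angle $>2\pi$, and the resulting angle excess forces the two bounding rays to diverge (by Gauss--Bonnet, or by $\CAT(0)$ comparison), contradicting that they remain a bounded distance apart. This is how the paper argues: pass to subrays of $\delta_1,\delta_2$ containing no cone points (possible since each geodesic lies in $\G^*(\varphi)\setminus\G^2(\varphi)$), take the convex hull of the two rays, note it contains no cone points for the reason just given, and conclude that it is flat and develops isometrically into $\R^2$ as the convex hull of two asymptotic --- hence parallel --- Euclidean rays, which visibly contains an isometrically embedded $[0,\infty)\times[0,\epsilon]$. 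If you replace your $\epsilon$--band existence argument with this ``no cone points between asymptotic rays'' argument, the rest of your construction goes through.
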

\begin{proof}  If $\delta_1$ and $\delta_2$ do not share a common ray, consider the region bounded by their asymptotic rays $\delta_1^+$ and $\delta_2^+$.  Since these geodesics are in $\G^*(\varphi) \setminus \G^2(\varphi)$ we can choose these rays to contain no cone points.    Consider the convex hull of these two rays.   Note that there can be no cone points in this region as this would force the rays to diverge.  Therefore, this region can be embedded into $\R^2$ as the convex hull of two asymptotic Euclidean rays.  But such a convex hull must contain an isometrically embedded $[0,\infty] \times [0,\epsilon]$ for some $\epsilon > 0$.
\end{proof}

\begin{lemma} \label{L:cone point char}
Suppose $x,x_1,x_2 \in S^1_\infty$ three distinct points with $\{x,x_1\},\{x,x_2\} \in \G^*_\varphi(\widetilde S) \setminus \G^2_\varphi(\widetilde S)$.  Then the following conditions are equivalent.
\begin{enumerate}
\item There exists $\delta_1,\delta_2 \in \G^*(\varphi)$ with $\partial_\varphi(\delta_i) = \{x,x_i\}$ for $i =1,2$, such that $\delta_1,\delta_2$ are $\varphi$--cone point asymptotic.
\item $[\{x,x_1\}, \{x,x_2\} ] \cap \partial_\varphi(\mathcal G^*(\varphi)) = \{ \{x,x_1\},\{x,x_2\} \}$.
\end{enumerate}
When this happens, $\delta_1,\delta_2$ are unique and hence so is the cone point they contain.
\end{lemma}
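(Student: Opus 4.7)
My plan is to prove the two implications separately; the hypothesis $\{x,x_i\} \in \G^*_\varphi(\widetilde S)\setminus \G^2_\varphi(\widetilde S)$ ensures that every representative of either class lies in $\G^*(\varphi)\setminus \G^2(\varphi)$, so that Lemma~\ref{L:ray-asympt-class} applies throughout.

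For $(2)\Rightarrow (1)$, fix representatives $\delta_1,\delta_2$ of $\{x,x_1\},\{x,x_2\}$. They are asymptotic at $x$, so by Lemma~\ref{L:ray-asympt-class} either they are $\varphi$-cone point asymptotic (giving (1)) or they are separated by a $\varphi$-flat half-strip $F\colon [0,\infty)\times[0,\epsilon]\to \widetilde S$ with one edge a subray of $\delta_1$ and the other a subray of $\delta_2$. In the half-strip case, for each $t\in (0,\epsilon)$ the horizontal ray $[0,\infty)\times\{t\}\subset F$ extends to a bi-infinite $\varphi$-geodesic $\delta_t$ whose forward endpoint is $x$ and whose backward endpoint $y_t$ lies strictly between $x_1$ and $x_2$ on the arc $[x_2,x_1]$. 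Because $\widetilde S$ has only countably many cone points, for a residual set of heights $t$ the backward extension of $\delta_t$ avoids all of them, placing $\delta_t\in \G^\circ(\varphi)\subset \G^*(\varphi)$. Hence $\{x,y_t\}\in [\{x,x_1\},\{x,x_2\}]\cap \partial_\varphi(\G^*(\varphi))$ is distinct from both boundary points, contradicting (2). So the half-strip alternative is impossible, forcing (1).

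For $(1)\Rightarrow (2)$, assume $\delta_1,\delta_2$ share a maximal common ray $R$ from a cone point $\zeta$ to $x$ and suppose for contradiction that $\delta\in \G^*(\varphi)$ has $\partial_\varphi(\delta)=\{x,y\}$ with $y$ strictly between. If $\delta$ contains $\zeta$, then by uniqueness of the geodesic ray from $\zeta$ to $x$ in the CAT(0) space $\widetilde S$, $\delta$ contains $R$ as a terminal ray; since $\delta\in \G^*(\varphi)$ makes angle exactly $\pi$ on one side at $\zeta$ (Proposition~\ref{P:at most one switch}) and the cone angle $\mathfrak{ang}(\zeta)>2\pi$ allows exactly two such extensions of $R$ past $\zeta$, namely $\delta_1$ and $\delta_2$, we conclude $\delta\in\{\delta_1,\delta_2\}$, forcing $y\in\{x_1,x_2\}$, a contradiction. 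If $\delta$ avoids $\zeta$, then Lemma~\ref{L:ray-asympt-class} applied to the pair $(\delta,\delta_1)\subset \G^*\setminus\G^2$ rules out cone point asymptotic (which would force $\delta$ to contain the unique cone point $\zeta$ of $\delta_1$), so $\delta$ and $\delta_1$ bound a $\varphi$-flat half-strip $F_1$; similarly $F_2$ separates $\delta$ from $\delta_2$. Near $x$, where $\delta_1$ and $\delta_2$ coincide along $R$, both $F_1$ and $F_2$ occupy the same flat region between $R$ and $\delta$, of some common width $\epsilon>0$.

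The main obstacle is converting the existence of $F_1,F_2$ into a contradiction. The key observation is that in the tangent cone at $\zeta$, the backward direction of $\delta_1$ (at angular position $\pi$ from $R$'s forward direction) and that of $\delta_2$ (at angular position $\mathfrak{ang}(\zeta)-\pi$) lie in opposite half-regions. On the side of $R$ where $\delta$ sits, the flat sector has angle exactly $\pi$ along $\delta_1$'s backward extension, so $F_1$ extends past $\zeta$ as a flat strip bounded by $\delta_1$ (fully) and $\delta$; if this strip extends to $-\infty$, then $\delta$ and $\delta_1$ are parallel bi-infinite geodesics with common endpoints $\{x,x_1\}$, contradicting $y\neq x_1$, while if it terminates, it must terminate at a cone point $\eta$ on $\delta$. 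A symmetric analysis of $F_2$ fails: $\delta_2$'s backward lies in the opposite half-region from $\delta$, so $F_2$ cannot extend past $\zeta$ and instead stops at $\zeta$ on its $\delta_2$-edge. Chasing this asymmetry — in particular that the strip with $\delta_1$ forces $\delta$ to have its $\pi$-angle at $\eta$ on the side away from the strip, while then the same geodesic used with $F_2$ has no cone point available to terminate the $F_2$ strip on the $-y$ side — produces a contradiction. Uniqueness in the final assertion follows because any other cone-point-asymptotic pair $\delta_1',\delta_2'$ must share the same ray $R'$ to $x$ from some cone point, and the CAT(0) uniqueness of rays to $x$ together with the fact that each of $\delta_i$ contains exactly the one cone point $\zeta$ forces $R'=R$, $\zeta'=\zeta$, and $\delta_i'=\delta_i$.
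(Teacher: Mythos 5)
Your $(2)\Rightarrow(1)$ direction and the uniqueness discussion follow essentially the paper's route (apply Lemma~\ref{L:ray-asympt-class}, then exhibit a nonsingular geodesic inside the putative half-strip whose second endpoint is neither $x_1$ nor $x_2$), and they are fine at the paper's own level of detail. The gap is in $(1)\Rightarrow(2)$. Your first sub-case, where $\delta$ contains $\zeta$, is correct and is exactly the paper's argument: $\delta$ must then contain the ray $R$, can depart from $\delta_1$ and $\delta_2$ only at $\zeta$ (there are no other cone points on them), and so makes angle strictly greater than $\pi$ on both sides at $\zeta$, violating Proposition~\ref{P:at most one switch}. But your second sub-case, where $\delta$ avoids $\zeta$, is not actually proved: the paragraph ends with ``chasing this asymmetry \dots produces a contradiction,'' which is a statement of intent rather than an argument, and several intermediate assertions are themselves unjustified --- for instance, Lemma~\ref{L:ray-asympt-class} is invoked for the pair $(\delta,\delta_1)$ although nothing in the hypotheses guarantees $\delta\notin\G^2(\varphi)$, and the claim that $F_1$ ``extends past $\zeta$ as a flat strip bounded by $\delta_1$ fully'' depends on the cone angle at $\zeta$ on the relevant side and is never established.

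The repair is that your second sub-case is vacuous; this is what the paper disposes of in one line by asserting that any $\delta$ strictly between $\delta_1$ and $\delta_2$ must contain their common ray. Concretely, $\delta_1\cup\delta_2$ is a tripod $r_1\cup R\cup r_2$, and the component of its complement whose ideal boundary is the closed arc from $x_1$ to $x_2$ not containing $x$ does not have $x$ in the closure of its boundary at infinity; since $y$ lies in the interior of that arc, $\delta$ must meet the tripod. If $\delta$ meets $\delta_i$ at a point $p$, then because $\delta$ and $\delta_i$ are both asymptotic to $x$, convexity and boundedness of the distance function force them to coincide from $p$ all the way out to $x$; geodesics in a Euclidean cone surface can separate only at cone points, and $\zeta$ is the only cone point on $\delta_i$, so the departure point is $\zeta$ and hence $\zeta\in\delta$ and $R\subset\delta$. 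This reduces everything to your first sub-case and makes the half-strip analysis unnecessary.
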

Recall that $[\{x,x_1\}, \{x,x_2\} ]$ denotes the set of endpoints {\em between} $\{x,x_1\}$ and $\{x,x_2\}$; see Section~\ref{sec:link-between}.
\begin{proof}
The first condition implies the second: any geodesic $\delta$ strictly between $\delta_1,\delta_2$ (i.e.~between, and not equal to either $\delta_i$) would have to contain the common ray of these two geodesics.  Note that $\delta$ cannot agree with, say, $\delta_1$ beyond this ray, since there are no more cone points along it where $\delta$ could depart from $\delta_1$ (similarly for $\delta_2$).  But then $\delta$ cannot make an angle $\pi$ at the cone point.

Now suppose the $\{x,x_1\},\{x,x_2\}$ satisfy the second condition and let $\delta_1,\delta_2 \in \G^*(\varphi)$ be any two geodesics with $\partial_\varphi(\delta_i) = \{x,x_i\}$ for $i =1,2$.   Note that $\delta_1,\delta_2 \in \G^*(\varphi) \setminus \G^2(\varphi)$ by assumption.  These geodesics are asymptotic, and so by Lemma \ref{L:ray-asympt-class} they are either cone-point asymptotic or else there is a $\varphi$--flat half-strip between them.  If they are cone-point asymptotic we are done, so suppose there is a $\varphi$--flat strip between them.

Suppose first that there is an entire $\varphi$--flat strip between $\delta_1,\delta_2$.  Then if one of $\delta_i$ is a geodesic in this strip (possibly the boundary geodesic), then we can replace it by the other boundary geodesic so that this strip is no longer between $\delta_1,\delta_2$.  If neither $\delta_i$ is a geodesic in the strip, then any geodesic $\delta$ in the strip lies between $\delta_1,\delta_2$, and has endpoints $x,y \in S^1_\infty$ with $y \neq x_1,x_2$.  This contradicts condition 2, and therefore we may assume that there is no such flat strip.

There is still a $\varphi$--flat half-strip between $\delta_1,\delta_2$ by assumption.  In this case, it is easy to find a nonsingular geodesic with a ray in this half-strip between $\delta_1,\delta_2$ and having a lower bound on the distance to each.  The endpoints of this are $x,y \in S^1_\infty$ with $y \neq x_1,x_2$, another contradiction.

Since any two geodesics with the same endpoints must bound a flat strip, the proof shows that $\delta_1,\delta_2$ are unique.
\end{proof}

\section{Geodesic currents}
\label{sec:currents}

\subsection{Geodesic currents and intersection numbers}

The action of $\pi_1(S)$ on $\widetilde S$ (or on $S^1_\infty$) determines an action on $\mathcal{G}(\widetilde S)$.  A {\em geodesic current} is defined to be a $\pi_1(S)$--invariant Radon measure on $\mathcal{G}(\widetilde S)$.  The space of all geodesic currents on $S$ with the weak* topology is denoted $\curr(S)$.  Given a non-null homotopic closed curve $\gamma$ on $S$, the endpoints of components of the preimage in $\widetilde S$ is a discrete, $\pi_1(S)$--invariant set of points in $\G(\widetilde S)$.  The counting measure on this set defines a geodesic current on $S$ which we also denote $\gamma$.  The set of real multiples of such currents are dense in $\curr(S)$.  For a discussion of these facts, as well as the following theorem, see Bonahon~\cite{bonahon:ends,bonahon:TC}.

\begin{theorem}
There exists a continuous, symmetric, bilinear form
\[ i \colon \curr(S) \times \curr(S) \to \R\]
such that for any two currents $\gamma_1,\gamma_2$ associated to closed curves of the same name, $i(\gamma_1,\gamma_2)$ is the geometric intersection number of the homotopy classes of these curves.
\end{theorem}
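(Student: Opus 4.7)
The plan is to realize the geometric intersection number of closed curves as the total mass of a certain $\pi_1(S)$--invariant measure on pairs of linked lifts, and then to extend this construction to arbitrary currents via the product measure. Let
\[ L = \{ (\{x_1,y_1\}, \{x_2,y_2\}) \in \G(\widetilde S) \times \G(\widetilde S) \mid \{x_1,y_1\} \text{ and } \{x_2,y_2\} \text{ link} \}. \]
Since linking is an open condition on quadruples of distinct points in $S^1_\infty$, the set $L$ is open in $\G(\widetilde S) \times \G(\widetilde S)$, and the diagonal $\pi_1(S)$--action on $L$ is properly discontinuous with trivial stabilizers (no nontrivial element of $\pi_1(S)$ fixes two distinct axes in $\widetilde S$). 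A Borel fundamental domain $D \subset L$ therefore exists, and for $\alpha, \beta \in \curr(S)$ I set
\[ i(\alpha,\beta) = (\alpha \times \beta)(D), \]
which is independent of the choice of $D$ by the diagonal $\pi_1(S)$--invariance of $\alpha \times \beta$.

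Bilinearity of $i$ is immediate from linearity of the product measure in each factor, and symmetry is inherited from the symmetry of the linking relation together with the symmetry of the product measure. For compatibility with geometric intersection number: when $\gamma_1, \gamma_2$ come from closed curves on $S$, the associated currents are counting measures on the $\pi_1(S)$--orbits of endpoint pairs of lifts, so $(\gamma_1 \times \gamma_2)(D)$ counts $\pi_1(S)$--orbits of pairs of linked lifts of $\gamma_1$ with lifts of $\gamma_2$. Each transverse intersection point on $S$ corresponds to exactly one such orbit, so the value recovers the geometric intersection number of the curves.

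The main obstacle is continuity in the weak* topology. Weak* convergences $\alpha_n \to \alpha$ and $\beta_n \to \beta$ yield $\alpha_n \times \beta_n \to \alpha \times \beta$ weakly on $\G(\widetilde S) \times \G(\widetilde S)$, but because $L$ is open rather than closed, the portmanteau theorem only supplies the inequality $(\alpha \times \beta)(L) \leq \liminf_n (\alpha_n \times \beta_n)(L)$. Equality in the limit will follow if one can choose a fundamental domain $D$ whose boundary is $(\alpha \times \beta)$--null. Now $\partial L$ consists of pairs of geodesics sharing an endpoint in $S^1_\infty$, and the key technical input is that for any geodesic current $\alpha$, the set of geodesics through a fixed point of $S^1_\infty$ has $\alpha$--measure zero except at countably many exceptional endpoints (by $\pi_1(S)$--invariance together with finiteness of $\alpha$ on compact sets). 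Combining this with a Fubini argument, one can select $D$ with $(\alpha \times \beta)$--null boundary, and a standard approximation then delivers continuity and completes the construction.
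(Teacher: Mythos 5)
The paper does not actually prove this theorem --- it is quoted from Bonahon with a citation --- so your proposal can only be measured against the standard argument. Your construction is precisely Bonahon's: define $i(\alpha,\beta)$ as the $(\alpha\times\beta)$--mass of a fundamental domain $D$ for the diagonal $\pi_1(S)$--action on the open set $L$ of linked pairs. The definition, its independence of $D$, bilinearity, symmetry, and the count recovering the geometric intersection number for closed curves are all essentially correct. (Proper discontinuity deserves a word more than "no nontrivial element fixes two distinct axes," which only gives freeness; the standard route is that the map sending a linked pair to the intersection point of the corresponding hyperbolic geodesics is continuous, equivariant, and sends compacta of $L$ to compacta of $\widetilde S$, where the action is properly discontinuous.)

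The genuine gap is in the continuity argument. You correctly identify that $L$ is open and that portmanteau only yields a liminf inequality, but your proposed fix --- choosing $D$ with $(\alpha\times\beta)$--null boundary --- does not resolve it. The obstruction is not the boundary of $D$ but the fact that $D$ is not relatively compact in $\G(\widetilde S)\times\G(\widetilde S)$: its closure accumulates on the frontier of $L$, namely pairs of geodesics sharing an endpoint at infinity, and the quotient $L/\pi_1(S)$ is noncompact. Weak* convergence of Radon measures controls only integrals of compactly supported continuous functions, so even with a null boundary one can have $(\alpha_n\times\beta_n)(D)\not\to(\alpha\times\beta)(D)$ because mass escapes toward the degenerate locus. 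What Bonahon actually proves is a uniform tightness estimate: for every $\epsilon>0$ there are a neighborhood of $(\alpha,\beta)$ in $\curr(S)\times\curr(S)$ and a compact $K\subset L$ with $(\alpha'\times\beta')(D\setminus K)<\epsilon$ for all $(\alpha',\beta')$ in that neighborhood; concretely, the product mass of linked pairs that intersect at very small angle (equivalently, nearly share an endpoint) is uniformly small for nearby currents. That estimate is the heart of the proof, and "a standard approximation then delivers continuity" is exactly where it is missing. Your observation that only countably many points of $S^1_\infty$ carry positive $\alpha$--mass of geodesics is true, but it only gives nullity of the frontier for the fixed limit measure, not the uniform control over the approximating sequence.
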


For geodesic currents $\mu,\gamma$, where $\gamma$ is the current associated to a closed curve of the same name, $i(\gamma,\mu)$ can be calculated as follows; see \cite{bonahon:TC}.  Choose a component $\widetilde \gamma \subset \widetilde S$ of $p^{-1}(\gamma)$.  Let $I(\widetilde \gamma)$ be a $\mu$--measurable fundamental domain for the action of the stabilizer of $\widetilde \gamma$ in $\pi_1(S)$ on the subset of $\G(\widetilde S)$ consisting of pairs of points linking the endpoints of $\widetilde \gamma$.  Then $i(\gamma,\mu)$ is computed as the $\mu$--measure of $I(\widetilde \gamma)$:
\begin{equation} \label{E:intersection_length_calc}
i(\mu,\gamma)= \mu(I(\widetilde \gamma))
\end{equation}

The following result of Otal~\cite{otal:length} is an important ingredient in marked length spectral rigidity.

\begin{theorem} \label{theorem:Otal} Two currents $\mu_1,\mu_2\in\curr(S)$ are equal if and only if $i(\gamma,\mu_1)=i(\gamma,\mu_2)$ for every curve $\gamma\in \mathcal{C}(S)$. 
\end{theorem}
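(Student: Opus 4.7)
The forward implication is immediate. For the converse, assume $i(\gamma, \mu_1) = i(\gamma, \mu_2)$ for every $\gamma \in \mathcal{C}(S)$. By \eqref{E:intersection_length_calc}, this translates to $\mu_1(I(\widetilde\gamma)) = \mu_2(I(\widetilde\gamma))$ for each $\gamma$ and each choice of fundamental domain. My plan is to bootstrap this agreement to a basis of open sets in $\G(\widetilde S)$ and then invoke regularity of Radon measures to conclude $\mu_1 = \mu_2$.

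First, I would promote the agreement on $I(\widetilde\gamma)$ to agreement on the full \emph{linking rectangle}
\[ L(x, y) := \{\{a, b\} \in \G(\widetilde S) : \{a,b\} \text{ links } \{x, y\}\}, \]
evaluated on compact windows. By $\pi_1(S)$-invariance of each $\mu_i$, the agreement on $I(\widetilde\gamma)$ passes to every $\langle\gamma\rangle$-translate, and $L(x^-_\gamma, x^+_\gamma)$ is tiled by such translates, where $x^\pm_\gamma$ denote the endpoints of the chosen lift $\widetilde\gamma$. Fixing a compact window $K \subset \G(\widetilde S)$ inside which $L(x^-_\gamma, x^+_\gamma)$ meets only finitely many translates, finite additivity gives $\mu_1(L(x^-_\gamma, x^+_\gamma) \cap K) = \mu_2(L(x^-_\gamma, x^+_\gamma) \cap K)$.

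Next, I would extend this equality across all of $S^1_\infty \times S^1_\infty \setminus D$ by density. Cocompactness of $\pi_1(S)$ implies that the axis pairs $(x^-_\gamma, x^+_\gamma)$ are dense there. Approximating an arbitrary pair $(x, y)$ by axis pairs, using Radon finiteness on $K$ together with the fact that the boundary of $L(x,y)$ carries no $\mu_i$-mass outside a countable exceptional set of $(x,y)$ which can be avoided, yields
\[ \mu_1(L(x, y) \cap K) = \mu_2(L(x, y) \cap K) \quad \text{for all distinct } x, y \in S^1_\infty. \]

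Finally, for any disjoint open arcs $U = (u_1, u_2)$ and $V = (v_1, v_2)$ with the four endpoints appearing in cyclic order $u_1, u_2, v_1, v_2$, a direct cyclic-order check gives the identity
\[ B(U, V) = \bigl(L(u_1, v_1) \cap L(u_2, v_2)\bigr) \setminus \bigl(L(u_2, v_1) \cap L(v_2, u_1)\bigr), \]
where $B(U, V) = \{\{a,b\} : a \in U, b \in V\}$. Since such boxes $B(U,V)$ form a basis for the topology of $\G(\widetilde S)$, combining this identity with the previous step yields $\mu_1(E \cap K) = \mu_2(E \cap K)$ for every open $E$ and compact $K$; inner regularity of Radon measures then gives $\mu_1 = \mu_2$. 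The main obstacle I anticipate is the localization to compact windows: since $L(x, y)$ typically carries infinite $\mu_i$-mass, the difference $\mu_1 - \mu_2$ is not a priori defined on it, so every equality must be verified inside finite-measure windows and recombined afterwards, and the density step requires care to avoid approximating axis pairs whose linking-rectangle boundaries have positive measure.
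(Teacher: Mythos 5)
First, a point of comparison: the paper does not prove this statement at all --- it is quoted from Otal \cite{otal:length} --- so there is no internal argument to measure yours against; what follows is an assessment of your proposal on its own terms. Your overall architecture (recover the measures of ``boxes'' $B(U,V)$, which form a basis, then conclude by regularity of Radon measures) is the right shape, and your set-theoretic identity expressing $B(U,V)$ in terms of linking sets is correct. But the first step, which is where all the content lives, has a genuine gap. From $i(\gamma,\mu_1)=i(\gamma,\mu_2)$ you know only that $\mu_1(I(\widetilde\gamma))=\mu_2(I(\widetilde\gamma))$, i.e.\ one real number per conjugacy class: the common mass of each whole tile $g^nI(\widetilde\gamma)$ in the tiling of the linking set $L(x^-_\gamma,x^+_\gamma)$. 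Your claim that ``finite additivity gives $\mu_1(L\cap K)=\mu_2(L\cap K)$'' when a compact window $K$ meets finitely many tiles is a non sequitur: knowing $\mu_1(A_n)=\mu_2(A_n)$ for each tile $A_n$ says nothing about $\mu_1(A_n\cap K)$ versus $\mu_2(A_n\cap K)$ --- the two measures could distribute the same total mass over a tile in entirely different ways. The equality you can legitimately extract holds only for $K$ equal to a union of whole tiles, and since the tiles depend on $\gamma$, there is no fixed window $K$ that works simultaneously for the several linking sets $L(u_i,v_j)$ (for varying $i,j$) and for a dense family of approximating axes, which is what your Steps 2 and 3 require. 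This is not the ``localization'' technicality you flag at the end; it is the absence of the mechanism that converts the single number $i(\gamma,\mu)$ into measures of genuinely local sets.

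The missing idea is Otal's actual device: for hyperbolic $g,h$ one compares $i\bigl(\mu,[g^nh^n]\bigr)$ with $n\,i(\mu,[g])+n\,i(\mu,[h])$ as $n\to\infty$; the axis of $g^nh^n$ fellow-travels $n$ fundamental domains along the axis of $g$ and then $n$ along the axis of $h$, and the limiting discrepancy isolates exactly the $\mu$-measure of the ``corner'' boxes determined by the four fixed points of $g$ and $h$. It is this use of \emph{products} of group elements --- entirely absent from your proposal --- that produces the measures of individual boxes with vertices in the (dense) set of fixed points, after which your density, boundary-mass, and regularity steps go through. A secondary issue: even granting your Step 1, your final identity computes $B(U,V)$ as a difference of \emph{intersections} of linking sets, and the measure of $L_1\cap L_2$ is not determined by $\mu(L_1\cap K)$ and $\mu(L_2\cap K)$ (linking sets do not form a $\pi$-system). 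This one is repairable: a short inclusion--exclusion with the four arcs cut out by $u_1,u_2,v_1,v_2$ gives $2\mu(B(U,V))=\mu(L(u_1,v_1))+\mu(L(u_2,v_2))-\mu(L(u_2,v_1))-\mu(L(u_1,v_2))$ (all restricted to a window, assuming no atoms over the four points), which uses only linking sets. The gap in Step 1, by contrast, is the theorem itself.
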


\subsection{Flat Liouville current}

Fix $\varphi \in \Flat(S)$.  Here we define the {\em pre-Liouville current} $\hat L_\varphi$ for $\varphi$ as a $\pi_1(S)$--invariant measure on $\G(\varphi)$ as follows as follows; see \cite{paternain,ALcurrents}.  First, we let $T^1(S^*)$ denote the unit tangent bundle over $S^* = S \setminus \cone(\varphi)$.  The (local) geodesic flow on $T^1(S^*)$, has a canonical invariant volume form given locally as one-half of the product of the area form on $S^*$ and the angle form on the fiber circles.  Contracting with the vector field generating the flow gives a flow-invariant $2$--form.  The absolute value is an invariant measure on the local leaf spaces of the foliation by flow lines.  Now lift to the universal cover, and restrict to the subspace where the flow is defined for all time.  The flow lines are precisely the geodesics in $\G^\circ(\varphi)$ (though they are oriented).  Thus the measure on the local leaf space determines a measure on the Borel set $\G^\circ(\varphi)$ which is invariant by $\pi_1(S)$.  This is extended by zero to the rest of $\G(\varphi)$.  Therefore the support of $\hat L_\varphi$ is contained in the closure $\G^*(\varphi)$  of $\G^\circ(\varphi)$.

There is a simple local formula for $\hat L_\varphi$ we now describe.  First, for any $\varphi$--geodesic segment $\alpha \subset \widetilde S$, parameterized by $t \mapsto \alpha(t)$ for $t \in [a,b]$, containing no cone points in its interior, consider the subset $E^\circ(\alpha) \subset \G^\circ(\varphi)$ of geodesics transversely crossing $\alpha$:
\[ E^\circ(\alpha) = \{ \gamma \in \mathcal G^\circ(\varphi) \mid \gamma \pitchfork \alpha \neq \emptyset \}. \]
Any geodesic $\gamma \in E^\circ(\alpha)$ is uniquely determined by $t \in [a,b]$ with $\alpha(t) = \gamma \cap \alpha$ and the angle $\theta \in (0,\pi)$ counterclockwise from $\alpha$ (with positive orientation) and $\gamma$.  Let $D^\circ(\alpha) \subset [a,b] \times (0,\pi)$ denote the set of pairs $(t,\theta)$ such that there exists a (necessarily unique) geodesic $\gamma(t,\theta) \in E^\circ(\alpha)$ meeting $\alpha$ at $\alpha(t)$ and making an angle $\theta$.  The assignment $(t,\theta) \mapsto \gamma(t,\theta)$ defines a bijection $D^\circ(\alpha) \to E^\circ(\alpha)$, which is easily seen to be a homeomorphism.
The measure $\hat L_\varphi$ restricted to $E^\circ(\alpha)$ is given by the push forward of the measure on $D^\circ(\alpha)$ given by
\begin{equation} \label{E:local-liouville}
\hat L_\varphi  = \frac{1}{2} \sin(\theta)  \, d \theta \, dt .
\end{equation}

The measure on the right is absolutely continuous with respect to Lebesgue measure on $[a,b] \times (0,\pi)$, and we note that $D^\circ(\alpha)$ is a set of full Lebesgue measure.  Indeed, for every $t$, and each of the countably many cone point in $\widetilde S$, there is exactly one geodesic segment from the cone point to $\alpha(t)$.  Consequently, $D^\circ(\varphi)$ is the complement of a countable set of closed sets, each intersecting the sets $\{t \} \times [0,\pi]$ in a single point, for each $t$.  Consequently, every point of $E^\circ(\alpha)$ is in the support of $\hat L_\varphi$.  It follows that every element of $\G^\circ(\varphi)$ is in the support, and hence $\supp(\hat L_\varphi) = \G^*(\varphi)$.  From (\ref{E:local-liouville}) we easily deduce the following.

\begin{proposition} \label{P:pre-measure is length}
For any geodesic segment $\alpha$, we have $\hat L_\varphi(E^\circ(\alpha)) = \ell_\varphi(\alpha)$, the $\varphi$--length of $\alpha$. \qed
\end{proposition}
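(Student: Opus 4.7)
My plan is to read off the formula directly from (\ref{E:local-liouville}), after first reducing to the case of a segment with no interior cone points and then performing a routine Fubini computation. The entire substance of the proposition is packaged into the setup preceding its statement, so the proof is short.

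First I would reduce to the case that $\alpha$ contains no cone points in its interior. Let $\zeta_1,\dots,\zeta_{k-1}\in\cone(\varphi)$ be the cone points in the interior of $\alpha$, listed in order, and partition $\alpha$ into subsegments $\alpha_1,\dots,\alpha_k$ meeting only at these cone points. Any $\gamma\in E^\circ(\alpha)$ is nonsingular, hence avoids each $\zeta_j$, so $\gamma\cap\alpha$ is disjoint from $\{\zeta_1,\dots,\zeta_{k-1}\}$ and must lie in the interior of exactly one of the $\alpha_i$. Moreover, two distinct geodesics in the $\CAT(0)$ space $\widetilde S$ that meet transversely meet in a single point (otherwise we would obtain two distinct geodesic segments between the two intersection points), so $\gamma$ crosses $\alpha$ at most once. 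Thus
\[ E^\circ(\alpha) = \bigsqcup_{i=1}^k E^\circ(\alpha_i), \]
and since $\ell_\varphi(\alpha)=\sum_i \ell_\varphi(\alpha_i)$, it suffices to prove $\hat L_\varphi(E^\circ(\alpha_i))=\ell_\varphi(\alpha_i)$ for each $i$.

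Now assume $\alpha$ has no cone points in its interior and parameterize it by arc length on $[a,b]$, so $b-a=\ell_\varphi(\alpha)$. By definition, $\hat L_\varphi$ restricted to $E^\circ(\alpha)$ is the push-forward of $\tfrac{1}{2}\sin\theta\,d\theta\,dt$ under the bijection $(t,\theta)\mapsto\gamma(t,\theta)$ from $D^\circ(\alpha)$. The discussion immediately preceding the proposition shows that $D^\circ(\alpha)$ is the complement in $[a,b]\times(0,\pi)$ of a countable union of closed sets, each meeting every vertical slice $\{t\}\times(0,\pi)$ in a single point; by Fubini this complement has full Lebesgue measure. Therefore
\[ \hat L_\varphi\bigl(E^\circ(\alpha)\bigr) \;=\; \int_{D^\circ(\alpha)} \tfrac{1}{2}\sin\theta\,d\theta\,dt \;=\; \int_a^b\!\!\int_0^\pi \tfrac{1}{2}\sin\theta\,d\theta\,dt \;=\; (b-a)\cdot 1 \;=\; \ell_\varphi(\alpha). \]

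There is essentially no obstacle: the local formula (\ref{E:local-liouville}) was already set up for precisely this computation. The only points worth noting explicitly are the disjointness of the decomposition $E^\circ(\alpha)=\bigsqcup E^\circ(\alpha_i)$ (which uses $\CAT(0)$ uniqueness of geodesics together with the fact that nonsingular geodesics avoid cone points) and the fact that $D^\circ(\alpha)$ has full Lebesgue measure in $[a,b]\times(0,\pi)$ (already established in the text preceding the proposition).
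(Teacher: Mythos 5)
Your proof is correct and is exactly the computation the paper has in mind: the proposition is stated with a \qed because the authors deduce it ``easily'' from the local formula (\ref{E:local-liouville}), and your reduction to cone-point-free subsegments followed by the Fubini integral $\int_a^b\int_0^\pi \tfrac12\sin\theta\,d\theta\,dt = b-a$ is precisely that omitted calculation. No issues.
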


Now we use the map $\partial_\varphi \colon \mathcal G(\varphi) \to \mathcal G(\widetilde S)$ to push this forward to currents, and declare this to be the {\em Liousville current} for $\varphi$:
\[ L_\varphi = \partial_{\varphi *} \hat L_\varphi.\]
Since the geodesic representative of a closed curve $\alpha$ is a union of finitely many Euclidean segments between cone points, the following is straightforward from Proposition~\ref{P:pre-measure is length} and Equation~(\ref{E:intersection_length_calc})
\begin{proposition} \label{P:measure is length}
For every closed curve $\alpha$ we have $i(L_\varphi,\alpha) = \ell_\varphi(\alpha)$. \qed
\end{proposition}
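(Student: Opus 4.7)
The plan is to combine Proposition~\ref{P:pre-measure is length} with Bonahon's intersection-number formula~(\ref{E:intersection_length_calc}) via the defining relation $L_\varphi = \partial_{\varphi *}\hat L_\varphi$. Since pushforward gives $L_\varphi(A) = \hat L_\varphi(\partial_\varphi^{-1}(A))$ for Borel $A$, taking $A = I(\widetilde\alpha)$ yields
\[ i(L_\varphi, \alpha) = L_\varphi(I(\widetilde\alpha)) = \hat L_\varphi\bigl(\partial_\varphi^{-1}(I(\widetilde\alpha))\bigr), \]
where $\widetilde\alpha \subset \widetilde S$ is a lift of the $\varphi$--geodesic representative of $\alpha$, with stabilizer $\langle g \rangle \subset \pi_1(S)$, and $I(\widetilde\alpha)$ is a fundamental domain for the $\langle g\rangle$--action on endpoint pairs in $\G(\widetilde S)$ linking those of $\widetilde\alpha$.

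Next I choose a fundamental segment of $\widetilde\alpha$ for $\langle g\rangle$ and decompose it into consecutive $\varphi$--geodesic sub-segments $\alpha_1, \ldots, \alpha_k$ joining cone points so that each $\alpha_i$ has no cone points in its interior and $\sum_i \ell_\varphi(\alpha_i) = \ell_\varphi(\alpha)$. Because $\hat L_\varphi$ is concentrated on $\G^\circ(\varphi)$, it suffices to measure the nonsingular geodesics in $\partial_\varphi^{-1}(I(\widetilde\alpha))$. A nonsingular linking geodesic meets $\widetilde\alpha$ transversely at a single point (by $\CAT(0)$, plus the observation that a nonsingular geodesic cannot share a segment with $\widetilde\alpha$ since it would be forced to depart at a cone point which it does not contain), and this intersection lies in the interior of exactly one $\alpha_i$ off of a countable, and hence $\hat L_\varphi$--null, exceptional set.

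Up to null sets, the nonsingular part of $\partial_\varphi^{-1}(I(\widetilde\alpha))$ is therefore the disjoint union $\bigsqcup_{i=1}^{k} E^\circ(\alpha_i)$, and Proposition~\ref{P:pre-measure is length} applied to each summand gives
\[ i(L_\varphi,\alpha) = \sum_{i=1}^{k} \hat L_\varphi(E^\circ(\alpha_i)) = \sum_{i=1}^{k} \ell_\varphi(\alpha_i) = \ell_\varphi(\alpha). \]

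The main bookkeeping obstacle is verifying that $I(\widetilde\alpha)$ really matches up with the fundamental segment of $\widetilde\alpha$ under $\partial_\varphi^{-1}$: on nonsingular geodesics this is automatic since $\langle g\rangle$ acts freely on linking nonsingular geodesics parameterized by (intersection point on $\widetilde\alpha$, angle) with natural fundamental domain (intersection in the fundamental segment, angle in $(0,\pi)$). The failure of injectivity of $\partial_\varphi$ on $\varphi$--flat strips is harmless: any $\hat L_\varphi$--mass carried by a flat strip whose common endpoint pair lies in $I(\widetilde\alpha)$ is simply reassigned to that single pair by the pushforward, so the identity $L_\varphi(I(\widetilde\alpha)) = \hat L_\varphi(\partial_\varphi^{-1}(I(\widetilde\alpha)))$ continues to deliver the correct count.
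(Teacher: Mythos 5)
Your argument is correct and follows exactly the route the paper intends: the paper dismisses this proposition with a one-sentence remark that it is ``straightforward'' from Proposition~\ref{P:pre-measure is length}, Equation~(\ref{E:intersection_length_calc}), and the decomposition of the geodesic representative into segments between cone points, and your proof is a faithful elaboration of precisely that outline (pushforward, restriction to the full-measure set $\G^\circ(\varphi)$, matching of fundamental domains, and summing $\hat L_\varphi(E^\circ(\alpha_i)) = \ell_\varphi(\alpha_i)$). The only quibble is that the exceptional set of nonsingular geodesics hitting a subdivision point is empty when the subdivision points are cone points and is a null (not necessarily countable) one-parameter family otherwise, but either way it carries no $\hat L_\varphi$--mass, so the conclusion stands.
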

We also have the following corollary of Proposition~\ref{P:closed map}
\begin{corollary} \label{C:support is closure}
For any $\varphi \in \Flat(S)$, $\supp(L_\varphi) = \G^*_\varphi(\widetilde S)$.
\end{corollary}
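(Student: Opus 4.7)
The plan is to use the general identity that for a continuous map $f \colon X \to Y$ of topological spaces and a Radon measure $\mu$ on $X$, one has $\supp(f_* \mu) = \overline{f(\supp \mu)}$. Applied to $f = \partial_\varphi$ and $\mu = \hat L_\varphi$, together with the preceding computation that $\supp(\hat L_\varphi) = \G^*(\varphi)$ and the fact (Proposition~\ref{P:closed map}) that $\partial_\varphi$ is closed, this immediately gives the desired equality.

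More concretely, I would verify the two inclusions of $\supp(\partial_{\varphi *} \hat L_\varphi) = \overline{\partial_\varphi(\G^*(\varphi))}$ directly. For the inclusion $\subseteq$, suppose $\{x,y\} \in \G(\widetilde S)$ is not in $\overline{\partial_\varphi(\G^*(\varphi))}$. Then there is an open neighborhood $V$ of $\{x,y\}$ in $\G(\widetilde S)$ disjoint from $\partial_\varphi(\G^*(\varphi))$, so $\partial_\varphi^{-1}(V)$ is an open set disjoint from $\G^*(\varphi) = \supp \hat L_\varphi$, hence has zero $\hat L_\varphi$--measure. Therefore $V$ has zero $L_\varphi$--measure, so $\{x,y\} \notin \supp(L_\varphi)$. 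For the reverse inclusion, any $\{x,y\} \in \partial_\varphi(\G^*(\varphi))$, say $\{x,y\} = \partial_\varphi(\delta)$ with $\delta \in \G^*(\varphi)$, has the property that every open neighborhood $V$ of $\{x,y\}$ pulls back to an open neighborhood of $\delta$, which meets $\supp \hat L_\varphi$ in positive measure; so $\{x,y\} \in \supp(L_\varphi)$, and since the support is closed we get $\overline{\partial_\varphi(\G^*(\varphi))} \subseteq \supp(L_\varphi)$.

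Finally, Proposition~\ref{P:closed map} says $\partial_\varphi$ is a closed map, and $\G^*(\varphi)$ is closed in $\G(\varphi)$ by definition, so $\partial_\varphi(\G^*(\varphi)) = \G^*_\varphi(\widetilde S)$ is already closed in $\G(\widetilde S)$. The overline in the previous paragraph is therefore unnecessary, and the identity becomes $\supp(L_\varphi) = \G^*_\varphi(\widetilde S)$ as claimed.

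There is no real obstacle here; the content is packaged in the two earlier results (the computation $\supp(\hat L_\varphi) = \G^*(\varphi)$ and the closed-map property of $\partial_\varphi$). The only point that requires a moment of care is the need for closedness of $\partial_\varphi$: without it one would only obtain $\supp(L_\varphi) = \overline{\G^*_\varphi(\widetilde S)}$, which is why Proposition~\ref{P:closed map} was proved earlier.
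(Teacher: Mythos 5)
Your proof is correct and is essentially the paper's argument: the paper simply cites the general fact that a continuous, closed map sends the support of a Borel measure onto the support of the push-forward measure, which is exactly the statement you prove by the two-inclusion argument (using $\supp(\hat L_\varphi)=\G^*(\varphi)$ and Proposition~\ref{P:closed map} to drop the closure). Nothing further is needed.
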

\begin{proof}
A continuous, closed map always sends the support of a Borel measure to the support of the push-forward measure.
\end{proof}


\section{Chains and cone points}
\label{sec:chains}

Let $L= L_\varphi$ be the Liouville current associated to $\varphi \in \Flat(S)$.
An {\em $L$--chain} is a sequence of points (finite, half-infinite, or bi-infinite) 
\[ {\bf x} = (\ldots, x_0,x_1,\ldots ) \subset S^1_\infty \]
such that for all $i$ we have
\begin{enumerate}
\item $\{x_i,x_{i+1} \} \in \supp(L)$, and
\item $x_i,x_{i+1},x_{i+2}$ is counterclockwise ordered triple of distinct points and 
\[ \big[\{x_i,x_{i+1}\},\{x_{i+1},x_{i+2} \}\big] \cap \supp(L) = \{ \{x_i,x_{i+1}\},\{x_{i+1},x_{i+2} \} \}.\]
\end{enumerate}
Recall that $\big[\{x_i,x_{i+1}\},\{x_{i+1},x_{i+2} \}\big]$ is the set $\{x,y\} \in \G(\widetilde S)$ between $\{x_i,x_{i+1}\}$ and $\{x_{i+1},x_{i+2}\}$ as in Section~\ref{sec:link-between}.  For the motivation for condition 2, see Lemma~\ref{L:cone point char}.

Two chains that differ by a shift of the indices are considered the same.



\subsection{Constructing chains}

Fix a cone point $\zeta \in \cone(\varphi)$ in $\widetilde S$.  Let $\G(\varphi,\zeta)$ denote those $\varphi$--geodesics in $\G^*(\varphi)$ containing $\zeta$.  Let $\G^1(\varphi,\zeta) = \G(\varphi,\zeta) \setminus \partial_\varphi^{-1}(\G^2_\varphi(\widetilde S))$.  Alternatively, $\G^1(\varphi,\zeta)$ consists of those $\delta \in \G(\varphi,\zeta)$ which contain no other cone points besides $\zeta$, and which are asymptotic only to geodesics containing at most one cone point.

\begin{figure}[htb]
\centering
\includegraphics[width=4cm]{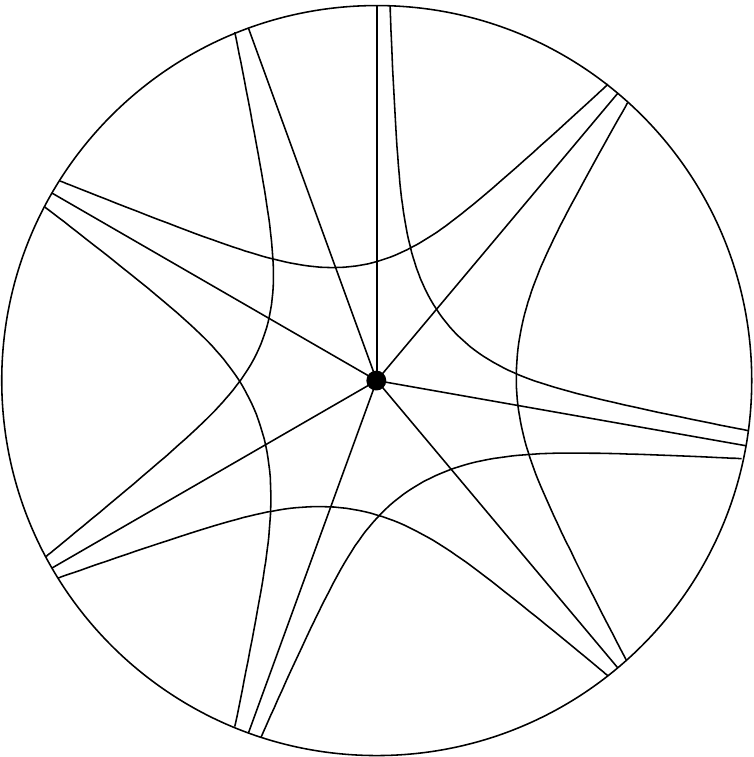}
\begin{picture}(0,0)
\put(-87,114){$x_0$}
\put(-121,25){$x_1$}
\put(-24,7){$x_2$}
\put(-23,103){$x_3$}
\put(-121,86){$x_4$}
\put(-87,-3){$x_5$}
\put(-2,45){$x_6$}
\put(-62,118){$x_7$}
\put(-75,55){$\zeta$}
\end{picture}
\caption{A finite chain $(x_0,x_1,\ldots,x_7) = \partial_\varphi(\delta_1,\ldots,\delta_7)$.  The geodesics $\delta_j$ are constructed from the rays emanating from the cone point $\zeta$.  We have also drawn nonsingular geodesics approximating each $\delta_j$ (from the side of $\delta_j$ where the angle is $\pi$) these to clarify which pairs of rays are used for each $\delta_j$.} \label{F:chain1}
\end{figure}

Declare two geodesics $\delta_1,\delta_2 \in \G^1(\varphi,\zeta)$ to be {\em adjacent at $\zeta$} if they are asymptotic in one direction.  We make a few simple observations.  First, adjacent geodesics must be $\varphi$--cone point asymptotic: since they contain the point $\zeta$ they cannot be separated by a $\varphi$--flat half-strip; see Lemma \ref{L:ray-asympt-class}.  Second, any $\delta \in \G^1(\varphi,\zeta)$ can be adjacent at $\zeta$ to at most two other geodesics in $\G^1(\varphi,\zeta)$: $\delta$ is made up of exactly two rays emanating from $\zeta$ and each of these is the common ray of at most one other geodesic in $\G^1(\varphi,\zeta)$.  

If $\delta_1,\delta_2 \in \G^1(\varphi,\zeta)$ are adjacent at $\zeta$ with endpoints $\partial_\varphi(\delta_i) = \{x,x_i\}$ for $i=1,2$, and $x_1,x,x_2$ are a counterclockwise ordered triple around $S^1_\infty$, then we write $\delta_1 \leq \delta_2$.  Note that by Lemma~\ref{L:cone point char}, the triple $x_1,x,x_2$ satisfy condition 2 in the definition of chain.

Now suppose $\boldsymbol \delta = ( \ldots, \delta_i,\delta_{i+1},\ldots ) \subset \G^1(\varphi,\zeta)$ is a sequence (finite, half-infinite, or bi-infinite) such that for all $i$ we have $\delta_i \leq \delta_{i+1}$.  Then let ${\bf x} = (\ldots,x_i,x_{i+1},\ldots)$ be the associated sequence of common endpoints;  see Figure~\ref{F:chain1}.  When $\boldsymbol \delta$ is bi-infinite (the primary case of interest), this is given by
\[ x_i = \partial_\varphi(\delta_i) \cap \partial_\varphi(\delta_{i+1}). \]
The same formula is valid in the finite or half-infinite case, except for the first and/or last terms of ${\bf x}$.  We write $\partial_\varphi(\boldsymbol \delta) = {\bf x}$, which is a chain.

\begin{proposition} \label{P:chains from cone points}
Suppose ${\bf x}$ is an $L$--chain with at least $3$ terms such that all consecutive pairs $\{x_i,x_{i+1}\}$ are in $\G^*_\varphi(\widetilde S) \setminus \G^2_\varphi(\widetilde S)$.  Then there exists a unique cone point $\zeta$ and sequence $\boldsymbol \delta \subset \G^1(\varphi,\zeta)$ such that $\partial_\varphi(\boldsymbol \delta) = {\bf x}$.
\end{proposition}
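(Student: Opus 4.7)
The plan is to apply Lemma~\ref{L:cone point char} to each consecutive triple of the chain and then glue the resulting geodesics using the uniqueness clause of that lemma to force a single common cone point.

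For each index $i$ such that $x_{i-1},x_i,x_{i+1}$ are three consecutive terms of ${\bf x}$, the two pairs $\{x_{i-1},x_i\}$ and $\{x_i,x_{i+1}\}$ lie in $\G^*_\varphi(\widetilde S)\setminus\G^2_\varphi(\widetilde S)$ by hypothesis, and chain condition (2) together with $\supp(L)=\G^*_\varphi(\widetilde S)$ (Corollary~\ref{C:support is closure}) is precisely condition (2) of Lemma~\ref{L:cone point char}. That lemma therefore produces unique geodesics $\delta_{i-1},\delta_i\in\G^*(\varphi)$ with $\partial_\varphi(\delta_{i-1})=\{x_{i-1},x_i\}$ and $\partial_\varphi(\delta_i)=\{x_i,x_{i+1}\}$, which are $\varphi$--cone point asymptotic at a unique cone point $\zeta_i$. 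Applying the lemma to the next triple $x_i,x_{i+1},x_{i+2}$ yields a cone point $\zeta_{i+1}$ together with a geodesic having endpoints $\{x_i,x_{i+1}\}$ and containing $\zeta_{i+1}$; by the uniqueness clause of Lemma~\ref{L:cone point char} this geodesic must equal $\delta_i$. Since $\delta_i\in\G^*(\varphi)\setminus\G^2(\varphi)$ contains at most one cone point, we conclude $\zeta_i=\zeta_{i+1}$. Propagating this identification along the (connected) index set of the chain yields a single cone point $\zeta$ shared by every $\delta_i$.

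Setting $\boldsymbol\delta=(\ldots,\delta_i,\delta_{i+1},\ldots)$, each $\delta_i$ lies in $\G(\varphi,\zeta)$ and has $\partial_\varphi(\delta_i)\notin\G^2_\varphi(\widetilde S)$, so $\delta_i\in\G^1(\varphi,\zeta)$. Consecutive geodesics $\delta_i,\delta_{i+1}$ are adjacent at $\zeta$ by construction, and the counterclockwise order $x_i,x_{i+1},x_{i+2}$ built into chain condition (2) gives the required ordering $\delta_i\leq\delta_{i+1}$. The identity $\partial_\varphi(\boldsymbol\delta)={\bf x}$ then follows from the formula $x_i=\partial_\varphi(\delta_i)\cap\partial_\varphi(\delta_{i+1})$ recorded just before the proposition. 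For uniqueness, any other datum $(\zeta',\boldsymbol\delta')$ satisfying the conclusion would supply geodesics $\delta_i'$ with the same endpoints as $\delta_i$ and cone-point asymptotic at $\zeta'$, so Lemma~\ref{L:cone point char} forces $\delta_i'=\delta_i$ and hence $\zeta'=\zeta$.

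The only nontrivial step is verifying that the cone points produced by successive triples agree, and this is handled essentially for free by the uniqueness clause of Lemma~\ref{L:cone point char}; the remainder is bookkeeping to check membership in $\G^1(\varphi,\zeta)$ and the ordering $\delta_i\leq\delta_{i+1}$.
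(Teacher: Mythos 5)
There is a genuine gap, and it sits exactly at the step you describe as ``handled essentially for free.'' The uniqueness clause of Lemma~\ref{L:cone point char} is uniqueness of the \emph{cone-point asymptotic pair} associated to a single triple: for the triple $(x_{i-1},x_i,x_{i+1})$ it produces the unique pair $(\delta_{i-1},\delta_i)$ that share a ray into $x_i$, and for the triple $(x_i,x_{i+1},x_{i+2})$ it produces the unique pair $(\delta_i'',\delta_{i+1})$ that share a ray into $x_{i+1}$. The two geodesics $\delta_i$ and $\delta_i''$ both have endpoints $\{x_i,x_{i+1}\}$, but they are singled out by \emph{different} conditions (asymptotic to different neighbors, in different directions), so the lemma's uniqueness does not identify them. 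Since $\partial_\varphi$ need not be injective, $\delta_i$ and $\delta_i''$ could a priori be the two boundary geodesics of a $\varphi$--flat strip; nothing in condition (2) of the definition of a chain rules this out directly, because all geodesics in such a strip map to the single point $\{x_i,x_{i+1}\}$ of $\G(\widetilde S)$ and hence create no new element of $\supp(L)$ between $\{x_{i-1},x_i\}$ and $\{x_i,x_{i+1}\}$.

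Ruling out that flat strip is the actual content of the paper's proof: if $\delta_i\neq\delta_i''$ bound a flat strip, one tracks which side of each geodesic carries the angle $\pi$ at its cone point and finds that the ray of $\delta_{i-1}$ ending at $x_{i-1}$ and the ray of $\delta_{i+1}$ ending at $x_{i+2}$ are forced to leave from \emph{opposite} sides of the strip, which puts one of the consecutive triples of ${\bf x}$ in clockwise rather than counterclockwise order --- contradicting condition (2). So the counterclockwise ordering hypothesis, which your argument never uses at this stage, is essential. The rest of your write-up (membership in $\G^1(\varphi,\zeta)$, the ordering $\delta_i\leq\delta_{i+1}$, and the uniqueness of $(\zeta,\boldsymbol\delta)$, where the lemma's uniqueness clause genuinely does apply triple by triple) is fine, but the proof is incomplete without the flat-strip exclusion.
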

\begin{proof}  Suppose that $\{x_i,x_{i+1},x_{i+2}\}$ are three consecutive terms.  From the second condition in the definition of chain, Lemma~\ref{L:cone point char} guarantees a unique pair $\delta_i,\delta_{i+1}' \in \G^*(\varphi) \setminus \G^2(\varphi)$ such that $\partial_\varphi(\delta_i) = \{x_i,x_{i+1}\}$ and $\partial_\varphi(\delta_{i+1}') = \{x_{i+1},x_{i+2}\}$, and such that $\delta_i,\delta_{i+1}'$ are $\varphi$--cone point asymptotic.  Let $\zeta_i$ be the unique cone point contained in $\delta_i$ and $\delta_{i+1}'$.

\begin{claim} For each $i$, $\delta_{i+1}' = \delta_{i+1}$ and $\zeta_i = \zeta_{i+1}$.
\end{claim}
\begin{proof}[Proof of claim.] We note that $\delta_{i+1}'$ and $\delta_{i+1}$ are asymptotic (since they have the same boundary points).  Consequently, they are either equal and $\zeta_i = \zeta_{i+1}$ (being the unique cone point in the geodesic), or they are boundary geodesics of a $\varphi$--flat strip in $\widetilde S$.  In the former case, we are done, and hence we assume the latter case.  This implies that the ray $r = \delta_i \setminus \delta_{i+1}'$ of $\delta_i$ must lie on the {\em opposite} side of $\delta_{i+1}'$ as the side where the cone angle at $\zeta_i$ is $\pi$.  Similarly, $r' = \delta_{i+1} \setminus \delta_{i+2}'$ lies on the opposite side of $\delta_{i+1}$ as the side where the cone angle at $\zeta_{i+1}$ is $\pi$.  Note that the side of $\delta_{i+1}$ (respectively, $\delta_{i+1}'$) containing the $\varphi$--flat strip is the side where the angle at the cone points is $\pi$.  It follows that $x_i$ and $x_{i+3}$ must lie in different components of $S^1_\infty \setminus \{x_{i+1},x_{i+2} \}$.  But this contradicts the counterclockwise order around $S^1_\infty$ for consecutive triples.  See Figure \ref{F:chain2}.\end{proof}

\begin{figure}[htb]
\centering
\includegraphics[width=4cm]{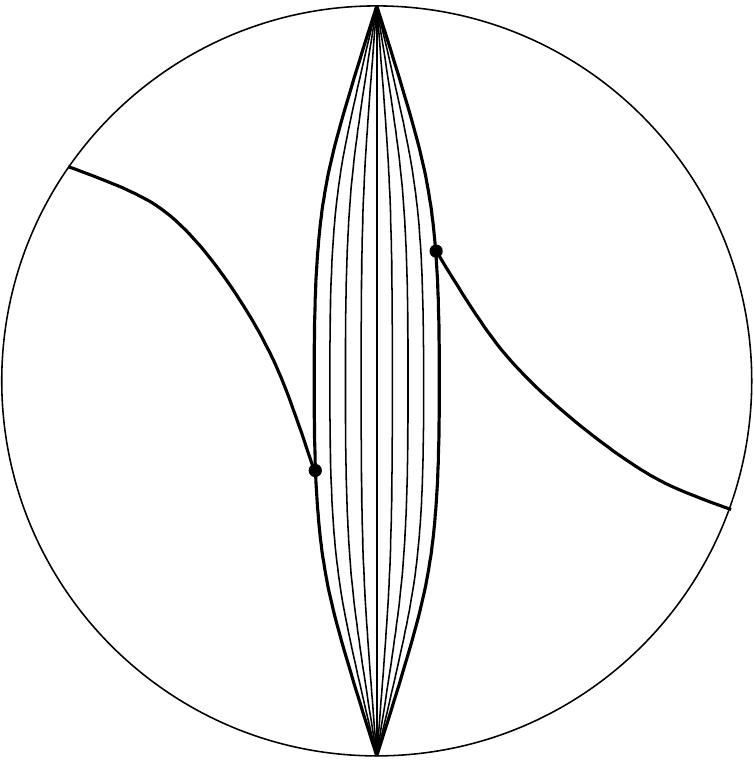}
\begin{picture}(0,0)
\put(-79,40){$\zeta_i$}
\put(-47,74){$\zeta_{i+1}$}
\put(-117,91){$x_i$}
\put(-67,-7){$x_{i+1}$}
\put(-67,118){$x_{i+2}$}
\put(-5,33){$x_{i+3}$}
\put(-91,70){$r$}
\put(-25,50){$r'$}
\end{picture}
\caption{The $\varphi$--flat strip has boundary geodesics $\delta_{i+1},\delta_{i+1}'$.  The strip forces one of the triples $x_i,x_{i+1},x_{i+2}$ or $x_{i+1},x_{i+2},x_{i+3}$ to be {\em clockwise} ordered.} \label{F:chain2}
\end{figure}

By the claim, there is a single cone point $\zeta$ so that $\zeta_i = \zeta$ for all $i$, and that
$ \partial_\varphi(\ldots,\delta_i,\delta_{i+1},\ldots ) = {\bf x}$.  Since each $\{x_i,x_{i+1}\}$ is in $\G^*_\varphi(\widetilde S) \setminus \G^2_\varphi(\widetilde S)$, it follows that $\delta_i \in \G^1(\varphi,\zeta)$.  Uniqueness of $\zeta$ follows from uniqueness in Lemma~\ref{L:cone point char}
\end{proof}

\subsection{Chain types and cone point identification}

Continue to assume $L = L_\varphi$ for some $\varphi \in \Flat(S)$.
For any countable set $\Omega \subset \G^*_\varphi(\widetilde S)$ with $\G^2_\varphi(\widetilde S) \subset \Omega$ (c.f.~Corollary~\ref{C:countable}), we define an {\em $(L,\Omega)$--chain} to be a bi-infinite $L$--chain ${\bf x}$ such that the consecutive pair $\{x_i,x_{i+1}\}$ is in $\G^*_\varphi(\widetilde S) \setminus \Omega$ for all $i \in \Z$.  Let $\Ch(L,\Omega)$ denote the set of all $(L,\Omega)$--chains.

Since $\G^2_\varphi(\widetilde S) \subset \Omega$ Proposition~\ref{P:chains from cone points} implies that every $(L,\Omega)$--chain ${\bf x}$ is given by ${\bf x} = \partial_\varphi(\boldsymbol \delta)$ for some unique $\zeta$ and $\boldsymbol \delta \subset \G^1(\varphi,\zeta)$.   We therefore have a well-defined map
\[ \partial_{\varphi}^\# \colon \Ch(L,\Omega) \to \cone(\varphi).\]
\begin{lemma}  \label{L:nonempty chains} 
For any countable $\Omega \supset \mathcal G^2_\varphi(\widetilde S)$, $\partial_\varphi^\#$ is surjective.
\end{lemma}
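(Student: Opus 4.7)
Fix $\zeta \in \cone(\varphi)$; the plan is to construct an $(L,\Omega)$--chain ${\bf x}$ with $\partial_\varphi^\#({\bf x}) = \zeta$. Let $C$ denote the link of $\zeta$, a circle of circumference $\mathfrak{ang}(\zeta) > 2\pi$. For each direction $p \in C$, let $\delta_p$ be the bi-infinite $\varphi$--geodesic through $\zeta$ whose two rays leave in directions $p$ and $p + \pi$, where $+\pi$ denotes angular translation by $\pi$ counterclockwise on $C$; the two angles at $\zeta$ are $\pi$ and $\mathfrak{ang}(\zeta) - \pi$, both at least $\pi$, so $\delta_p$ is a local, hence $\CAT(0)$ global, geodesic. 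Two rays from a common basepoint in a $\CAT(0)$ space can be asymptotic only if they coincide (by convexity of $t \mapsto d(r_1(t),r_2(t))$), so the map $\xi \colon C \to S^1_\infty$ sending a direction to the endpoint at infinity of its ray is a continuous injection between circles, hence an orientation-preserving homeomorphism. In particular, $\delta_p$ and $\delta_{p + \pi}$ share the ray at direction $p+\pi$, are adjacent at $\zeta$, and satisfy $\delta_p \leq \delta_{p+\pi}$ in the sense of the paragraph preceding Proposition~\ref{P:chains from cone points}.

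I now choose a starting direction $p_0 \in C$ and form $\boldsymbol\delta = (\delta_{p_0 + i\pi})_{i \in \Z}$, requiring that each $\delta_{p_0 + i\pi}$ lie in $\G^1(\varphi, \zeta)$ and have endpoint pair outside $\Omega$. Two countable families of bad $p_0$ must be avoided. First, for each cone point $\zeta' \neq \zeta$ there is a unique $\CAT(0)$ geodesic segment $[\zeta, \zeta']$, determining a single direction $d(\zeta') \in C$, and $\delta_p$ contains $\zeta'$ iff $p \in \{d(\zeta'), d(\zeta') - \pi\}$; summing over the countably many $\zeta'$ and $i \in \Z$ forbids countably many values of $p_0$. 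Second, two distinct $\varphi$--geodesics with the same endpoints bound a $\varphi$--flat strip, and such a strip contains no cone points, so at most one geodesic through $\zeta$ has any given endpoint pair $\omega \in \G^*_\varphi(\widetilde S)$; hence for each $(\omega, i) \in \Omega \times \Z$ at most one $p_0$ satisfies $\partial_\varphi(\delta_{p_0 + i\pi}) = \omega$, again countable. Since $C$ is uncountable, a valid $p_0$ exists.

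For such a $p_0$, the first exclusion ensures each $\delta_{p_0+i\pi}$ contains no cone point other than $\zeta$, placing it in $\G^*(\varphi)$ by the proposition preceding Proposition~\ref{P:at most one switch}; the second then gives $\partial_\varphi(\delta_{p_0+i\pi}) \notin \Omega \supset \G^2_\varphi(\widetilde S)$, so $\delta_{p_0+i\pi} \in \G^1(\varphi,\zeta)$. Since $\mathfrak{ang}(\zeta) > 2\pi$ ensures $p_0, p_0+\pi, p_0+2\pi$ are three distinct points on $C$, injectivity of $\xi$ yields distinct consecutive triples of endpoints, and orientation preservation yields the counterclockwise cyclic order. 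The sequence $\boldsymbol\delta$ thus meets the hypotheses of the construction preceding Proposition~\ref{P:chains from cone points}, and ${\bf x} = \partial_\varphi(\boldsymbol\delta)$ is a bi-infinite chain whose consecutive pairs all lie in $\supp(L)\setminus\Omega$; i.e., ${\bf x} \in \Ch(L,\Omega)$. By Proposition~\ref{P:chains from cone points} and the definition of $\partial_\varphi^\#$, $\partial_\varphi^\#({\bf x}) = \zeta$.

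The main conceptual step is the structural observation that $\xi$ is an orientation-preserving homeomorphism, so that rotations by $\pi$ around the link translate cleanly into counterclockwise motion along $S^1_\infty$; the remainder is a Baire-style count of countable exclusions against the uncountable parameter space $C$.
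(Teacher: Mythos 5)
Your proof is correct and follows essentially the same route as the paper's: parameterize the angle-$\pi$ geodesics through $\zeta$ by directions in the link, discard the countably many starting directions whose $\pi$-rotation orbit yields a geodesic meeting another cone point or with endpoint pair in $\Omega$, and take any surviving bi-infinite sequence. The only quibble is that $\xi$ is not well-defined (hence not a continuous injection) at directions whose ray meets another cone point, since the geodesic continuation there is non-unique; but you only invoke injectivity and cyclic-order preservation at directions in the complement of your excluded set, where the rays are unique and pairwise disjoint away from $\zeta$, so nothing is lost.
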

\begin{proof}  Fix any $\zeta \in \cone(\varphi)$ in $\widetilde S$.  Consider the set of $\varphi$--geodesics $\delta \in \G(\varphi)$ so that for one of the sides of $\delta$, it makes an angle $\pi$ on that side at every cone point it contains.  Such geodesics are parameterized by pairs of directions at $\zeta$ making angle $\pi$.  There are uncountably many such, with only countably many either containing more than one cone point or being asymptotic to a geodesic with more than one cone point.  Let $\Delta$ be the remaining uncountably set of geodesics.  All but countably many geodesics in $\Delta$ are contained in a unique bi-infinite sequence $\boldsymbol \delta = (\ldots,\delta_i,\delta_{i+1},\ldots)$  with each $\delta_i \in \Delta$ (note that any such bi-infinite sequence is uniquely determined by any of its elements).  There are uncountably many such sequences.  At most countably many of these can contain a geodesic $\delta$ with $\partial_\varphi(\delta) \in \Omega$.  The remaining uncountably many sequences $\boldsymbol \delta$ have $\partial_\varphi(\delta) \in \Ch(L,\Omega)$.  Applying $\partial_\varphi^\#$ to any of these we get $\zeta$.  Since $\zeta$ was arbitrary, $\partial_\varphi^\#$ is onto.
\end{proof}

As this proof illustrates, a sequence $\boldsymbol \delta = (\ldots,\delta_i,\delta_{i+1},\ldots)$ with $\partial_\varphi(\boldsymbol \delta) \in \Ch(L,\Omega)$ with $\partial_\varphi^\#({\bf x}) = \zeta$ is determined by a sequence of rays emanating from $\zeta$, so that consecutive rays make counterclockwise angle $\pi$.  The next lemma is clear from this.
\begin{lemma} \label{L:periodic aperiodic}
Any $(L,\Omega)$--chain ${\bf x}$ is either periodic or the sequence consists of distinct points in $S^1_\infty$.  These two cases correspond to the case when the $\varphi$--cone angle at $\partial^\#_\varphi({\bf x})$ is a rational multiple of $\pi$ and irrational multiple of $\pi$, respectively. \qed
\end{lemma}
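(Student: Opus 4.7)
The plan is to translate the lemma into a question about rotations on the link circle at the cone point $\zeta := \partial^\#_\varphi({\bf x})$. By Proposition~\ref{P:chains from cone points}, ${\bf x} = \partial_\varphi(\boldsymbol\delta)$ for a unique bi-infinite sequence $\boldsymbol\delta = (\ldots,\delta_i,\delta_{i+1},\ldots) \subset \G^1(\varphi,\zeta)$. Each $\delta_i$ contains $\zeta$ and no other cone point, so by Proposition~\ref{P:at most one switch} it decomposes as the union of two geodesic rays from $\zeta$ meeting at angle $\pi$ on one side. Consecutive $\delta_i$ and $\delta_{i+1}$ are $\varphi$--cone point asymptotic along a common ray $r_{i+1}$ emanating from $\zeta$ with $\partial_\varphi(r_{i+1}) = x_{i+1}$, so the whole sequence $\boldsymbol \delta$ is determined by a bi-infinite sequence of rays $(r_i)$ from $\zeta$ with $\delta_i = r_i \cup r_{i+1}$.

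Next, I would parametrize the set of rays from $\zeta$ by the link circle $L_\zeta$ of total length $\alpha := \mathfrak{ang}(\zeta) > 2\pi$. The assignment $r \mapsto \partial_\varphi(r)$ gives an injective map $L_\zeta \to S^1_\infty$: two geodesic rays from a common point of a $\CAT(0)$ space sharing an endpoint at infinity must coincide, since the distance between their unit speed parametrizations is a convex, nonnegative, bounded function vanishing at $0$, hence identically zero. A continuous injection of circles is automatically a homeomorphism, and with the natural orientations it preserves cyclic order. The counterclockwise ordering condition in the definition of a chain then forces the rays $(r_i)$ to advance counterclockwise on $L_\zeta$ by exactly the angle $\pi$ at each step: the alternative, of going by $\alpha-\pi$ in the opposite direction along the link at $\zeta$ (the other way to complete a geodesic through $\zeta$ making angle $\pi$ on some side), would place $x_{i+2}$ on the wrong arc of $S^1_\infty \setminus \{x_i, x_{i+1}\}$ and produce a clockwise-ordered triple. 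Identifying $L_\zeta$ with $\R/\alpha\Z$ and placing $r_0$ at $0$, we obtain $r_n$ at position $n\pi \pmod \alpha$.

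The lemma now reduces to elementary arithmetic. We have $x_n = x_m$ if and only if $r_n = r_m$ if and only if $(n-m)\pi \in \alpha\Z$. If $\alpha/\pi = p/q$ in lowest terms, then $p\pi = q\alpha \in \alpha\Z$, so $x_{n+p} = x_n$ for all $n$ and the chain is periodic. If $\alpha/\pi \notin \Q$, then $(n-m)\pi \notin \alpha\Z$ whenever $n \neq m$, so all of the $x_n$ are distinct. The main point requiring care is the verification that the step direction on $L_\zeta$ is consistently counterclockwise by $\pi$, which is a bookkeeping matter once we know that the endpoint map $L_\zeta \to S^1_\infty$ is an orientation-preserving homeomorphism.
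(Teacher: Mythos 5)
Your argument is correct and is essentially the paper's own: the paper dispenses with this lemma by observing that a chain based at $\zeta$ corresponds to a sequence of rays from $\zeta$ with consecutive counterclockwise angle $\pi$, declaring the dichotomy ``clear,'' and your rotation-by-$\pi$ computation on $\R/\alpha\Z$ is exactly the omitted bookkeeping. The only loose phrase is the claim that $r \mapsto \partial_\varphi(r)$ is a homeomorphism of the whole link circle onto $S^1_\infty$ --- rays in directions hitting a cone point are not determined by their direction --- but this is harmless since each $\delta_i \in \G^1(\varphi,\zeta)$ contains no cone point other than $\zeta$, so the rays $r_i$ you actually use are determined by, and injectively and cyclically-order-preservingly recorded by, their directions and endpoints.
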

The second case, when all points in ${\bf x}$ are distinct, we will call {\em aperiodic}.

The next lemma tells us that we can decide when two chains have the same $\partial_\varphi^\#$--image, appealing only to topological properties of $\supp(L)$.  To describe it, we first make a definition for periodic chains.

Any periodic $(L,\Omega$)--chain ${\bf x}$ contains exactly $k = k({\bf x}) \geq 3$ points in $S^1_\infty$ (repeated infinitely often)---this is precisely the minimal period of the sequence.  There is thus a smallest integer $n = n({\bf x}) > 0$ so that the sequence
\[ x_0,x_n,x_{2n},\ldots,x_{(k-1)n} \]
is this set of points cyclically ordered counterclockwise.  To see this, note that $x_i,x_{i+1}$ are endpoints of rays emanating from $\zeta$ making a counterclockwise angle $\pi$.  There are finitely many such rays by periodicity, and any two rays which are counterclockwise consecutive, make the same angle.  The shift $x_i \mapsto x_{i+1}$ generates a group of rotations of these rays through an angle $\pi$, and this acts transitively on the rays.  Thus, rotating each ray to the next counterclockwise consecutive ray is some minimal power $n$ of our generator, and thus $x_i$ and $x_{i+n}$ are counterclockwise consecutive points in ${\bf x}$ around $S^1_\infty$.
See Figure~\ref{F:chain3}.

\begin{figure}[htb]
\centering
\includegraphics[width=4cm]{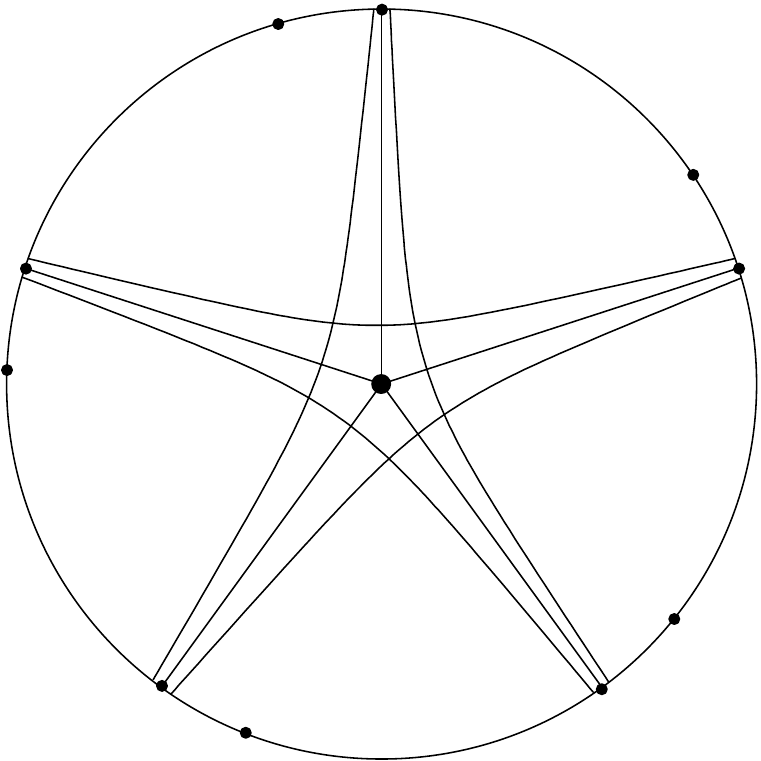}
\begin{picture}(0,0)
\put(-65,118){$x_0$}
\put(-101,5){$x_1$}
\put(-4,73){$x_2$}
\put(-125,74){$x_3$}
\put(-28,5){$x_4$}
\put(-85,114){$y_0$}
\put(-83,-2){$y_1$}
\put(-10,89){$y_2$}
\put(-127,57){$y_3$}
\put(-13,17){$y_4$}
\end{picture}
\caption{Endpoints of a periodic chain ${\bf x} = (\ldots,x_0,x_1,x_2,x_3,x_4,\ldots)$ with period $5$. The periodic chain ${\bf y} = (\ldots,y_0,y_1,y_2,y_3,y_4,\ldots)$ shown, also with period $5$, is perfectly interlaced with ${\bf x}$.} \label{F:chain3}
\end{figure}

We say that periodic chains ${\bf x}$ and ${\bf y}$ are {\em perfectly interlaced} if $k({\bf x}) = k({\bf y}) = k$ and $n({\bf x}) = n({\bf y})=n$ and after shifting indices of ${\bf y}$ if necessary we have $y_{jn}$ lies in the counterclockwise interval $(x_{jn},x_{(j+1)n})$ for all $j \in \Z$.  See Figure~\ref{F:chain3}.  By periodicity, this implies $y_{jn+r}$ lies in the counterclockwise interval $(x_{jn+r},x_{(j+1)n+r})$ for all $j,r \in \Z$.

\begin{lemma} \label{L:Intrinsic cone points}
Given ${\bf x},{\bf y} \in \Ch(L,\Omega)$ we have $\partial_\varphi^\#({\bf x}) = \partial_\varphi^\#({\bf y})$ if and only if
\begin{enumerate}
\item[(a)] ${\bf x}$ and ${\bf y}$ are both periodic and their endpoints are perfectly interlaced, or
\item[(b)] ${\bf x}$ and ${\bf y}$ are both aperiodic and for any $y_i,y_{i+1}$, there exists a sequence $x_{j_n},x_{j_n+1} \to y_i,y_{i+1}$ as $j_n \to \infty$.
\end{enumerate}
\end{lemma}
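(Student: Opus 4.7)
The plan is to prove both directions via the uniqueness clause of Lemma~\ref{L:cone point char}: whenever its condition 2 holds (which, for any $(L,\Omega)$--chain, is exactly chain condition 2), there is at most one $\varphi$--cone-point-asymptotic pair in $\G^*(\varphi)$ with the given pair of endpoints, and hence at most one common cone point. Writing $\zeta_x=\partial_\varphi^\#({\bf x})$ and $\zeta_y=\partial_\varphi^\#({\bf y})$, I will exploit this both to extract angular structure at a shared cone point (forward direction) and to rule out $\zeta_x\ne\zeta_y$ (reverse direction).

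\emph{Forward.} Suppose $\zeta_x=\zeta_y=\zeta$. Each chain is encoded by a bi-infinite sequence of rays at $\zeta$ with consecutive rays making angle $\pi$, and by Lemma~\ref{L:periodic aperiodic} both chains are periodic iff $\mathfrak{ang}(\zeta)/\pi\in\Q$. In the periodic case with $\mathfrak{ang}(\zeta)=(p/q)\pi$ in lowest terms, each chain's rays form one coset of the group of $(\pi/q)$--rotations in the link at $\zeta$; two cosets are either equal (so ${\bf x}$ and ${\bf y}$ agree up to shift) or disjoint, and in the disjoint case the cyclic order of rays at $\zeta$ interleaves them one-for-one, which via the cyclic-order-preserving endpoint map at $\zeta$ produces perfect interlacing on $S^1_\infty$ with $k=p$ and $n=q^{-1}\bmod p$. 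In the aperiodic case $\mathfrak{ang}(\zeta)/\pi\notin\Q$, so the orbit $\{\theta_0+j\pi\bmod\mathfrak{ang}(\zeta)\}_{j\in\Z}$ is dense; given any ${\bf y}$--ray angle $\phi_i$, pick $j_n$ with $\theta_0+j_n\pi\to\phi_i$, whence $\theta_0+(j_n+1)\pi\to\phi_{i+1}$, and continuity of the endpoint map gives $(x_{j_n},x_{j_n+1})\to(y_i,y_{i+1})$.

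\emph{Reverse.} For some index $i$, I will produce a pair in $\G^*(\varphi)^2$, $\varphi$--cone-point-asymptotic at $\zeta_x$, with endpoints $\{y_{i-1},y_i\}$ and $\{y_i,y_{i+1}\}$. Since chain condition 2 for ${\bf y}$ at $i$ is exactly condition 2 of Lemma~\ref{L:cone point char} for these endpoints, the uniqueness clause forces this constructed pair to coincide with ${\bf y}$'s canonical pair $(\delta_{i-1}^y,\delta_i^y)$ through $\zeta_y$, so $\delta_i^y$ passes through both $\zeta_x$ and $\zeta_y$; since $\{y_i,y_{i+1}\}\notin\G^2_\varphi(\widetilde S)$ by the $(L,\Omega)$--chain hypothesis, $\delta_i^y$ contains only one cone point, forcing $\zeta_x=\zeta_y$. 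In case (b) the pair is obtained by Arzela--Ascoli subsequential limits (cf.\ the proof of Proposition~\ref{P:closed map}) of sequences $\delta_{j_n}^x\to\delta$ and $\delta_{j'_n}^x\to\delta'$ coming from the density hypothesis applied to $(y_i,y_{i+1})$ and $(y_{i+1},y_{i+2})$; each approximant passes through $\zeta_x$, so the limits do too, and they share the unique $\CAT(0)$ ray from $\zeta_x$ to $y_{i+1}$ and are cone-point asymptotic there. In case (a) I instead extend the unique $\varphi$--geodesic ray from $\zeta_x$ to $y_i$ on each side at angle $\pi$, obtaining $\G^*(\varphi)$--geodesics via the proposition preceding Proposition~\ref{P:at most one switch}; their other endpoints $z_\pm$ then lie in $\supp(L)$, chain condition 2 for ${\bf y}$ restricts them to $\{y_{i\pm 1}, x_i, x_{i+1}\}$, and the $x_j$ options are eliminated by angular rigidity at $\zeta_x$ (the $y_i$--ray cannot make angle $\pi$ with any $x_j$--ray without coinciding with some other $x_{j'}$--ray, contradicting $y_i\ne x_{j'}$), giving $\{z_+,z_-\}=\{y_{i-1},y_{i+1}\}$.

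The main technical obstacle is case (a), where limits are unavailable and I must argue directly that the angle--$\pi$ extensions at $\zeta_x$ lie in $\G^*(\varphi)$ (so their endpoints populate $\supp(L)$) and that those endpoints are forced to be the ${\bf y}$--chain neighbors of $y_i$. The $\G^*$--membership comes from approximating each extension by nonsingular geodesics on its angle--$\pi$ side near $\zeta_x$; the identification $z_\pm=y_{i\pm 1}$ comes from combining chain condition 2 for ${\bf y}$ with the angular rigidity of ${\bf x}$'s $\pi$--spaced rays at $\zeta_x$. Both ingredients feed Lemma~\ref{L:cone point char}'s uniqueness to collapse the two candidate cone points to one.
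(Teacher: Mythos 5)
Your forward direction and your reverse argument in case (b) are sound. The forward direction is essentially the paper's argument (rays at $\zeta$ spaced by angle $\pi$, the rational/irrational dichotomy for $\mathfrak{ang}(\zeta)/\pi$, density in the irrational case). Your reverse case (b) is a genuinely different route from the paper's: you take Arzela--Ascoli limits of the chain geodesics of ${\bf x}$ through $\zeta_{\bf x}$, observe that the two limits share the unique $\CAT(0)$ ray from $\zeta_{\bf x}$ to $y_{i+1}$ and hence form a cone-point-asymptotic pair realizing condition 1 of Lemma~\ref{L:cone point char}, and then invoke the uniqueness clause of that lemma to identify this pair with the canonical pair through $\zeta_{\bf y}$. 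The paper instead places $\zeta_{\bf x}$ outside a half-plane $H^+_{i,{\bf y}}$ and derives a forbidden flat strip. Your version is arguably cleaner and reuses the machinery already built; it checks out (the limits lie in the closed set $\G^*(\varphi)$, contain $\zeta_{\bf x}$, and have only one cone point since $\{y_i,y_{i+1}\}\notin\G^2_\varphi(\widetilde S)$).

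Case (a) of the reverse direction, however, has a genuine gap, and you have correctly identified it as the crux without closing it. Two steps fail as written. First, the extensions $\delta_\pm$ of the ray $r$ from $\zeta_{\bf x}$ to $y_i$ at angle $\pi$ need not lie in $\G^*(\varphi)$: the proposition preceding Proposition~\ref{P:at most one switch} requires the geodesic to contain \emph{at most one} cone point, and nothing rules out additional cone points on $r$ or on the extension rays $r_\pm$; if such points exist, $\delta_\pm$ would have to make angle exactly $\pi$ on a consistent side at every one of them to lie in $\G^*(\varphi)$ (Proposition~\ref{P:at most one switch}), which your construction does not arrange. Second, and more seriously, the assertion that chain condition 2 for ${\bf y}$ restricts $z_\pm$ to $\{y_{i\pm1},x_i,x_{i+1}\}$ is unjustified: condition 2 only constrains pairs $\{y_i,z\}$ lying in the between-set $\bigl[\{y_{i-1},y_i\},\{y_i,y_{i+1}\}\bigr]$, i.e.\ with $z$ in the counterclockwise arc from $y_{i+1}$ to $y_{i-1}$, and you have not shown $z_\pm$ lands there; nor is it clear how the options $x_i,x_{i+1}$ would arise from condition 2 for ${\bf y}$. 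This matters because perfect interlacing --- the essential hypothesis of case (a), without which the conclusion is false --- enters your argument nowhere except, implicitly, in this unproved restriction. The paper's proof uses interlacing head-on: with $y_0\in(x_0,x_n)$ and $y_1\in(x_1,x_{n+1})$, the geodesic through $\zeta_{\bf x}$ joining $x_0$ to $x_{n+1}$ has both endpoints in the closure of the complement of $H^+_{0,{\bf y}}$ while passing through a point of $H^+_{0,{\bf y}}$, forcing a double crossing of $\delta_{0,{\bf y}}$ and contradicting $\CAT(0)$. Some argument of this kind is needed to finish case (a).
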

\begin{proof}
Assume first that $\zeta = \partial_\varphi^\#({\bf x}) = \partial_\varphi^\#({\bf y})$.  Both are either periodic or both are aperiodic as this only depends on the cone angle at the point $\zeta$.  If they are both periodic, then the rays emanating from $\zeta$ defining ${\bf x}$ differ from those defining ${\bf y}$ by rotating by some fixed angle.  This clearly implies that ${\bf x}$ and ${\bf y}$ are perfectly interlaced.  If both are aperiodic, and $\partial_\varphi(\boldsymbol \delta) = {\bf x}$, then note that the set of geodesics in $\delta_i \in \boldsymbol \delta$ are dense in the set of geodesics $\G^1(\varphi,\zeta)$.  In particular for any consecutive pair $y_i,y_{i+1}$, let $\delta \in \G^1(\varphi,\zeta)$ be such that $\partial_\varphi(\delta)$.   Then there is a sequence $\delta_{j_n}$ with $\delta_{j_n} \to \delta$.  This implies condition (b).
We now prove the reverse implication.

For case (a), we suppose ${\bf x},{\bf y} \in \Ch(L,\Omega)$ are perfectly interlaced and $\zeta_{\bf x} = \partial_\varphi^\#({\bf x})  \neq  \partial_\varphi^\#({\bf y}) =\zeta_{\bf y}$.  Let $\boldsymbol \delta_{\bf x}$ and $\boldsymbol \delta_{\bf y}$ with $\partial_\varphi(\boldsymbol \delta_{\bf x}) = {\bf x}$ and $\partial_\varphi(\boldsymbol \delta_{\bf y}) = {\bf y}$.  For each $\delta_{i,{\bf y}} \in \boldsymbol \delta_{\bf y}$, let $H_{i,{\bf y}}^+$ denote the half-plane in $\widetilde S$ bounded by $\delta_{i,{\bf y}}$ containing the side on which $\delta_{i,{\bf y}}$ makes angle $\pi$ at $\zeta_{\bf y}$.  Observe that $\widetilde S= \cup_i H_{i,{\bf y}}^+$, and thus $\zeta_{\bf x} \in H_{i,{\bf y}}^+$, for some $i$.  Without loss of generality, suppose $i=0$, and $y_{jn+r} \in (x_{jn+r},x_{(j+1)n+r})$ for all $j,r \in \Z$.   In particular $y_0 \in (x_0,x_n)$ and $y_1 \in (x_1,x_{n+1})$.  The $\varphi$--geodesic between $x_0$ and $x_{n+1}$ passes through $\zeta_{\bf x}$ (though it may make angle greater than $\pi$ on both sides, and hence may not be in $\boldsymbol \delta_{\bf x}$).  On the other hand, the endpoints of this geodesic are in the closure of the {\em complementary half-plane} $\widetilde S \setminus H_{0,{\bf y}}^+$.  Thus this geodesic must meet $\delta_{0,{\bf y}}$ in two points, contradicting the fact that $\varphi$ is $\CAT(0)$.   See Figure~\ref{F:chain4}.

\begin{figure}[htb]
\centering
\includegraphics[width=4cm]{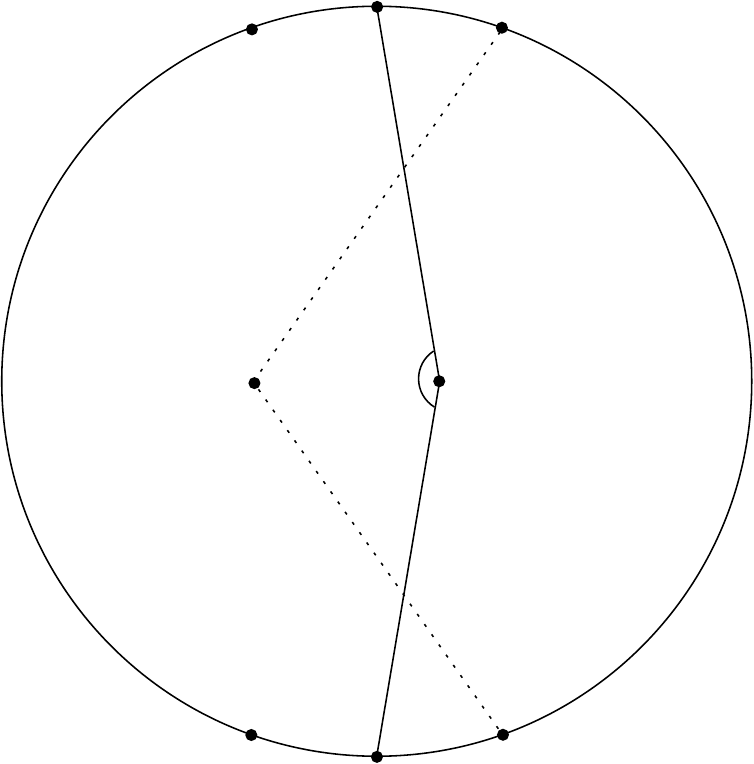}
\begin{picture}(0,0)
\put(-65,118){$y_0$}
\put(-65,-5){$y_1$}
\put(-47,116){$x_0$}
\put(-47,-3){$x_{n+1}$}
\put(-47,55){$\zeta_{\bf y}$}
\put(-87,-2){$x_1$}
\put(-85,116){$x_n$}
\put(-92,55){$\zeta_{\bf x}$}
\put(-60,57){$\pi$}
\end{picture}
\caption{Concatenating the ray from $\zeta_{\bf x}$ to $x_0$ with the ray from $\zeta_{\bf x}$ to $x_{n+1}$ is a geodesic.} \label{F:chain4}
\end{figure}

For case (b), we use the same notation as in case (a) writing $\boldsymbol \delta_{\bf x}$ and $\boldsymbol \delta_{\bf y}$ with $\partial_\varphi(\boldsymbol \delta_{\bf x}) = {\bf x}$ and $\partial_\varphi(\boldsymbol \delta_{\bf y}) = {\bf y}$, and $H_{i,{\bf y}}^+$ for the half-planes bounded by $\delta_{i,{\bf y}}$ containing the sides with angle $\pi$ at $\zeta_{\bf y}$.  In this case, we choose $i$ so that $\zeta_{\bf y}$ lies in the complementary half-plane $\zeta_{\bf y} \in \widetilde S \setminus H_{i,{\bf y}}^+$.  Condition (b) guarantees a sequence $\{\{x_{j_n},x_{j_n+1}\}\}_n = \{\partial_\varphi(\delta_{j_n,{\bf x}})\}_n$ so that $\{x_{j_n},x_{j_n+1}\}$ converges to $\{y_i,y_{i+1}\}$.  It follows that $\delta_{j_n,{\bf x}}$ converges to a geodesic $\delta$ which is asymptotic to $\delta_{i,{\bf y}}$.  However, $\zeta_{\bf x} \in \delta$ (since it lies in all approximating geodesics) while $\zeta_{\bf y}$ is the only cone point in $\delta_{i,{\bf y}}$.  Consequently there is a $\varphi$--flat strip between $\delta$ and $\delta_{i,{\bf y}}$.  But this must be in the closure of $\zeta_{\bf y} \in \widetilde S \setminus H_{i,{\bf y}}^+$, which is impossible since the cone angle at $\zeta_{\bf y}$ on that side of $\delta_{i,{\bf y}}$ is strictly greater than $\pi$.
\end{proof}

Given two $(L,\Omega)$--chains ${\bf x}$ and ${\bf y}$, we write ${\bf x} \sim {\bf y}$ if and only if $\partial_\varphi^\#({\bf x}) = \partial_\varphi^\#({\bf y})$.  According to Lemma~\ref{L:Intrinsic cone points}, we see that ${\bf x} \sim {\bf y}$ is determined by $\supp(L)$, without reference of $\varphi$.  We write $[{\bf x}]$ for the equivalence class of ${\bf x}$.

\subsection{Chains and distances}

Continue to assume $L= L_\varphi$ for $\varphi \in \Flat(S)$ and $\Omega \subset \supp(L)$ a countable set containing $\G^2_\varphi(\widetilde S)$.  As we have already seen, the data of $(L,\Omega)$ can be used to reconstruct the cone points for the metric $\varphi$. In this section, we continue to read off information about $\varphi$ from $L$.  

For two distinct equivalence classes of $(L,\Omega)$--chains $[{\bf x}]$ and $[{\bf y}]$, we define
\[ \big[ [{\bf x}],[{\bf y}] \big] = \bigcup [\{x_i,x_{i+1}\},\{y_j,y_{j+1}\} ] \]
where the union is over ${\bf x}' \in [{\bf x}]$, ${\bf y}' \in [{\bf y}]$, and all consecutive pairs $\{x_i,x_{i+1}\}$ in ${\bf x}'$ and $\{y_j,y_{j+1}\}$ in ${\bf y}'$.  In words, $\big[ [{\bf x}],[{\bf y}] \big]$ consists of all endpoints of geodesics that lie between some geodesic from some ${\bf x}'$ in $[{\bf x}]$ and some geodesic from some ${\bf y}'$ in $[{\bf y}]$.
\begin{proposition} \label{P:measure is distance}
For any pair of distinct equivalence classes $[{\bf x}],[{\bf y}]$ with $\zeta_{\bf x} = \partial_\varphi^\#({\bf x})$ and $\zeta_{\bf y} = \partial_\varphi^\#({\bf y})$ we have
\[ L\left(\big[ [{\bf x}],[{\bf y}] \big] \right) = \varphi(\zeta_{\bf x},\zeta_{\bf y}).\]
\end{proposition}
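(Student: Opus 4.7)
Let $\alpha \subset \widetilde S$ be the unique (by $\CAT(0)$) $\varphi$--geodesic segment from $\zeta_{\bf x}$ to $\zeta_{\bf y}$; then $\ell_\varphi(\alpha) = \varphi(\zeta_{\bf x},\zeta_{\bf y})$. Since $L = \partial_{\varphi*}\hat L_\varphi$ and $\hat L_\varphi$ vanishes outside $\G^\circ(\varphi)$ by construction, Proposition~\ref{P:pre-measure is length} will deliver the claim as soon as
\[ \partial_\varphi^{-1}\bigl(\bigl[[{\bf x}],[{\bf y}]\bigr]\bigr) \cap \G^\circ(\varphi) \;=\; E^\circ(\alpha), \]
for then $L\bigl(\bigl[[{\bf x}],[{\bf y}]\bigr]\bigr) = \hat L_\varphi(E^\circ(\alpha)) = \ell_\varphi(\alpha) = \varphi(\zeta_{\bf x},\zeta_{\bf y})$.

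The inclusion ``$\subseteq$'' is the easy direction. If $\delta\in\G^\circ(\varphi)$ has $\partial_\varphi(\delta)\in[\{x_i,x_{i+1}\},\{y_j,y_{j+1}\}]$ for consecutive pairs from chains representing $[{\bf x}]$ and $[{\bf y}]$, then the corresponding $\delta_1\ni\zeta_{\bf x}$ and $\delta_2\ni\zeta_{\bf y}$ are separated by $\delta$ (by definition of ``between'' together with convexity of half-planes in the $\CAT(0)$ plane $\widetilde S$), so $\delta$ separates $\zeta_{\bf x}$ from $\zeta_{\bf y}$. Being nonsingular, $\delta$ must cross $\alpha$ transversely.

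For ``$\supseteq$'', let $\delta\in E^\circ(\alpha)$, let $H_{\bf x},H_{\bf y}\subset\widetilde S$ be the open half-planes of $\widetilde S\setminus\delta$ containing $\zeta_{\bf x}$ and $\zeta_{\bf y}$, and let $A_{\bf x},A_{\bf y}\subset S^1_\infty$ be the corresponding open arcs with common endpoints $\partial_\varphi(\delta)=\{p,q\}$. I aim to produce $\delta_1\in\G^1(\varphi,\zeta_{\bf x})$ with $\partial_\varphi(\delta_1)\subset A_{\bf x}$ and lying in some chain representing $[{\bf x}]$, and a symmetric $\delta_2$ at $\zeta_{\bf y}$. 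The cyclic order on $S^1_\infty$ then places $\{p,q\} \in [\partial_\varphi(\delta_1),\partial_\varphi(\delta_2)] \subset \bigl[[{\bf x}],[{\bf y}]\bigr]$, completing the desired set equality.

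The production of such $\delta_1$ is the main obstacle, and it rests on a cone-angle estimate. By the $\CAT(0)$ comparison inequality applied to approximating triangles $\zeta_{\bf x} a_n b_n$ with $a_n,b_n\in\delta$ tending to infinity toward $p$ and $q$, the Alexandrov angle at $\zeta_{\bf x}$ between the two asymptotic rays toward $p$ and $q$ is at most $\pi$. Consequently, the directions at $\zeta_{\bf x}$ whose ray meets $\delta$ form an arc of angular measure at most $\pi$ in the direction circle of circumference $\mathfrak{ang}(\zeta_{\bf x})>2\pi$. A geodesic through $\zeta_{\bf x}$ contained in $H_{\bf x}$ corresponds to a pair of directions $v,v+\pi$ both lying in the complementary ``good'' arc; by inclusion--exclusion the set of such $v$ has angular measure at least $\mathfrak{ang}(\zeta_{\bf x}) - 2\pi > 0$. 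Excluding the countable set of directions whose geodesic contains a second cone point, is asymptotic to a geodesic in $\G^2(\varphi)$, or fails to lie in a bi-infinite chain representing $[{\bf x}]$ (cf.\ Corollary~\ref{C:countable} and the proof of Lemma~\ref{L:nonempty chains}), we still have an uncountable family of candidates for $\delta_1$; the symmetric construction at $\zeta_{\bf y}$ produces $\delta_2$, and the argument is complete.
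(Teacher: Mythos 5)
Your proof is correct and follows the same route as the paper: reduce the claim to Proposition~\ref{P:pre-measure is length} by identifying $\big[[{\bf x}],[{\bf y}]\big]$, up to the $L$--null complement of $\G^\circ_\varphi(\widetilde S)$, with the $\partial_\varphi$--image of $E^\circ(\alpha)$ for $\alpha$ the geodesic segment from $\zeta_{\bf x}$ to $\zeta_{\bf y}$. The only difference is that the paper states this set identity as an observation, whereas you actually prove it --- in particular your cone--angle/inclusion--exclusion argument producing a chain geodesic on each side of a given $\delta\in E^\circ(\alpha)$ supplies the nontrivial inclusion the paper leaves implicit.
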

\begin{proof}
We first observe that $\big[ [{\bf x}],[{\bf y}] \big] \cap \G^\circ_\varphi(\widetilde S)$ is precisely the $\partial_\varphi$--image of the set $E^\circ(\alpha)$ of all $\varphi$--geodesics in $\G^0(\varphi)$ transversely crossing the $\varphi$--geodesic segment $\alpha$ between $\zeta_{\bf x}$ and $\zeta_{\bf y}$.
Since $\G^\circ(\varphi)$ is a set of full $\hat L_\varphi$--measure, so $\G^\circ_\varphi(\widetilde S)$ is a set of full $L$--measure.  Therefore, appealing to Proposition~\ref{P:pre-measure is length} we have
\[ L\left(\big[ [{\bf x}],[{\bf y}] \big] \right) = L\left(\big[ [{\bf x}],[{\bf y}] \big] \cap \G^\circ_\varphi(\widetilde S) \right) = \hat L_\varphi(E^\circ (\alpha))=\ell_\varphi(\alpha) = \varphi(\zeta_{\bf x},\zeta_{\bf y}).\]
\end{proof}

\section{Proof of the Main Theorem}
\label{sec:end}

The Main Theorem will follow easily from the next

\begin{theorem} \label{T:main2} Suppose $\varphi_1,\varphi_2 \in \Flat(S)$ with $L_{\varphi_1} = L = L_{\varphi_2}$.  Then $\varphi_1 = \varphi_2$ in $\Flat(S)$, i.e.~there is an isometry $(S,\varphi_1) \to (S,\varphi_2)$ isotopic to the identity.
\end{theorem}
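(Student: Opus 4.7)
The strategy is to extract from $L$ a $\pi_1(S)$-equivariant, distance-preserving bijection between cone point sets, then promote it to an isometry of $\widetilde S$ by a triangulation argument controlled by Proposition~\ref{prop:CAT(0)}.

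The first step is to assemble the bijection. Since $L_{\varphi_1}=L=L_{\varphi_2}$, Corollary~\ref{C:support is closure} gives $\G^*_{\varphi_1}(\widetilde S)=\supp(L)=\G^*_{\varphi_2}(\widetilde S)$, and by Corollary~\ref{C:countable} the set $\Omega=\G^2_{\varphi_1}(\widetilde S)\cup\G^2_{\varphi_2}(\widetilde S)$ is countable and contains both $\G^2_{\varphi_i}(\widetilde S)$; hence $\Ch(L,\Omega)$ is well-defined and the same for both metrics. By Lemma~\ref{L:Intrinsic cone points}, the equivalence relation $\mathbf{x}\sim\mathbf{y}\iff \partial_{\varphi_i}^\#(\mathbf{x})=\partial_{\varphi_i}^\#(\mathbf{y})$ is characterized purely in terms of $\supp(L)$, so it agrees for $i=1,2$. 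Combined with Lemma~\ref{L:nonempty chains}, each $\partial_{\varphi_i}^\#$ descends to a bijection $\Ch(L,\Omega)/\!\sim\,\to\cone(\varphi_i)$, and composing them yields a $\pi_1(S)$-equivariant bijection $f\colon\cone(\varphi_1)\to\cone(\varphi_2)$. Applying Proposition~\ref{P:measure is distance} to each metric gives
\[
\varphi_1(\zeta,\zeta')=L\bigl(\bigl[[\mathbf{x}],[\mathbf{y}]\bigr]\bigr)=\varphi_2\bigl(f(\zeta),f(\zeta')\bigr)
\]
for all $\zeta,\zeta'\in\cone(\varphi_1)$.

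Next, I would promote $f$ to an isometry via triangles. Choose a $\pi_1(S)$-equivariant $\varphi_1$-geodesic triangulation $\mathcal T$ of $\widetilde S$ with vertex set $\cone(\varphi_1)$ and edges whose interiors contain no cone points, refining the $\Delta$-complex structure from \cite{masur:HDN}. Each $T\in\mathcal T$ is then flat with all cone points at vertices, so $T$ is isometric to its Euclidean comparison triangle $T'$ and $\mathrm{Area}_{\varphi_1}(T)=\mathrm{Area}(T')$. Let $T^f$ be the $\varphi_2$-geodesic triangle with vertices $f(\zeta_1),f(\zeta_2),f(\zeta_3)$; distance preservation makes $T'$ its comparison triangle as well, so Proposition~\ref{prop:CAT(0)} gives
\[
\mathrm{Area}_{\varphi_2}(T^f)\le\mathrm{Area}(T')=\mathrm{Area}_{\varphi_1}(T),
\]
with equality iff $T^f$ is isometric to $T'$.

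To close the argument, I would use a global area comparison. The Liouville current determines the total area of a flat metric (integrating Proposition~\ref{P:pre-measure is length} or via $i(L_\varphi,L_\varphi)$), so $\mathrm{Area}(S,\varphi_1)=\mathrm{Area}(S,\varphi_2)$. Provided the triangles $\{T^f\}$ assemble into a triangulation of $(\widetilde S,\varphi_2)$, summing yields
\[
\mathrm{Area}(S,\varphi_2)=\sum_T\mathrm{Area}_{\varphi_2}(T^f)\le\sum_T\mathrm{Area}_{\varphi_1}(T)=\mathrm{Area}(S,\varphi_1)=\mathrm{Area}(S,\varphi_2),
\]
forcing equality in every term. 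Proposition~\ref{prop:CAT(0)} then provides an isometry $T\to T^f$ for each $T$, and these match on shared edges (by length preservation), assembling to a $\pi_1(S)$-equivariant isometry $\widetilde S\to\widetilde S$ that descends to the desired isometry $(S,\varphi_1)\to(S,\varphi_2)$ isotopic to the identity. The main obstacle I expect is verifying that the image triangles $T^f$ genuinely form a triangulation of $(\widetilde S,\varphi_2)$---that adjacent $T^f$ meet along common $\varphi_2$-geodesic edges without extra crossings and wrap exactly once around each cone point of $\varphi_2$---which will require combining $\pi_1(S)$-equivariance of $f$ with the CAT(0) uniqueness of geodesics between cone points in $\varphi_2$.
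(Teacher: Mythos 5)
Your route is the paper's: the same $\Omega=\G^2_{\varphi_1}(\widetilde S)\cup\G^2_{\varphi_2}(\widetilde S)$, the same chain machinery producing a $\pi_1(S)$--equivariant, distance-preserving bijection of cone point sets via Lemmas~\ref{L:nonempty chains} and~\ref{L:Intrinsic cone points} and Proposition~\ref{P:measure is distance}, and the same promotion to a global isometry by comparing areas of triangles in a cone-point triangulation using Proposition~\ref{prop:CAT(0)}. The one place you diverge is also the one genuine gap: to force equality in the triangle-by-triangle area comparison you invoke the claim that $L$ determines the total area of the metric (``integrating'' Proposition~\ref{P:pre-measure is length}, or $i(L_\varphi,L_\varphi)$). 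That Crofton-type formula is true, but it is established nowhere in the paper and does not follow from Proposition~\ref{P:pre-measure is length} alone; as written it is an unproven input of comparable difficulty to the rest of the argument.

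The paper sidesteps it with a symmetry device you should adopt: assume without loss of generality that $Area_{\varphi_1}(S)\leq Area_{\varphi_2}(S)$. The simplexwise-defined map $\widetilde f$ is continuous, $\pi_1(S)$--equivariant, and extends to the identity on $S^1_\infty$, so it descends to a degree-one (hence surjective) map of $S$; therefore the image triangles cover $(S,\varphi_2)$ and
\[ Area_{\varphi_2}(S)\;\leq\;\sum_T Area_{\varphi_2}(T^f)\;\leq\;\sum_T Area_{\varphi_1}(T)\;=\;Area_{\varphi_1}(S)\;\leq\;Area_{\varphi_2}(S), \]
forcing equality term by term. Note that this also dissolves the obstacle you flag at the end: you never need the triangles $T^f$ to assemble into a triangulation of $(\widetilde S,\varphi_2)$ a priori --- covering (surjectivity) suffices for the area count, and once Proposition~\ref{prop:CAT(0)} upgrades each $T\to T^f$ to an isometry compatible with the already-fixed isometric edge parameterizations, the global map is a local isometry everywhere and the tiling statement follows a posteriori. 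With that one substitution your argument is the paper's proof.
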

\begin{proof}
Let $\Omega = \G^2_{\varphi_1}(\widetilde S) \cup \G^2_{\varphi_2}(\widetilde S)$.  According to Lemma~\ref{L:nonempty chains}, for each $i=1,2$ the map $\partial_{\varphi_i}^\# \colon \Ch(L,\Omega) \to \cone(\varphi_i)$ is surjective.  Appealing to Lemma~\ref{L:Intrinsic cone points} we have 
\[ \partial_{\varphi_1}^\#({\bf x}) = \partial_{\varphi_1}^\#({\bf y}) \quad \Leftrightarrow \quad {\bf x} \sim {\bf y} \quad \Leftrightarrow \quad \partial_{\varphi_2}^\#({\bf x}) = \partial_{\varphi_2}^\#({\bf y})\]
Consequently, sending $\partial_{\varphi_1}^\#({\bf x})$ to $\partial_{\varphi_2}^\#({\bf x})$ well-defines a bijection
\[ F \colon \cone(\varphi_1) \to \cone(\varphi_2).\]
independent of the choice of ${\bf x}$ within the equivalence class.  Both $\partial_{\varphi_1}^\#$ and $\partial_{\varphi_2}^\#$ are $\pi_1(S)$--equivariant, and so $F$ is also.  Furthermore, according to Proposition~\ref{P:measure is distance} we have
\begin{equation} \label{E:isometry on vertices}
\varphi_2(F(\zeta),F(\zeta')) = \varphi_1(\zeta,\zeta').
\end{equation}

Now without loss of generality, assume that $Area_{\varphi_1}(S)\leq Area_{\varphi_2}(S)$.
Let $\mathcal{T}$ be an $\varphi_1$--triangulation of $S$ such that $\cone(\varphi_1)$ are the vertices of $\mathcal{T}$. Lift $\mathcal{T}$ to a triangulation $\widetilde{\mathcal{T}}$ on $\widetilde S$ and define a map
\[ \widetilde f \colon (\widetilde S,\varphi_1) \to (\widetilde S,\varphi_2) \]
as follows.  First, define $\widetilde f$ on $\cone(\varphi_1)$ by $\widetilde f|_{\cone(\varphi_1)} = F$.  This is $\pi_1(S)$--equivariant, and so we extend over edges of triangles of $\widetilde{\mathcal{T}}$, $\pi_1(S)$--equivariantly mapping these to geodesics.  Note that by Equation~(\ref{E:isometry on vertices}), we can assume that the restriction to every edge is also an isometry.

Let $\widetilde \Delta$ denote a lift of a triangle in $\mathcal{T}$ and $\widetilde \Delta'=\widetilde f(\widetilde \Delta)$. The universal cover $(\widetilde S,\varphi_1)$ is a $\CAT(0)$ space and since all cone points are vertices, we conclude that $\widetilde \Delta$ contains no cone points besides its vertices, and hence is itself a comparison triangle for $\widetilde \Delta'$. Thus by Proposition~\ref{prop:CAT(0)}, $Area(\widetilde \Delta') \leq Area(\widetilde \Delta)$ for every triangle $\widetilde \Delta$ and its image $\widetilde \Delta' = \widetilde f(\widetilde \Delta)$.    Consequently, applying this to every triangle in $\mathcal{T}$ we get $Area_{\varphi_1}(S) \geq Area_{\varphi_2}(S)$. Therefore $Area_{\varphi_1}(S)= Area_{\varphi_2}(S)$ and $Area(\widetilde \Delta') = Area(\widetilde \Delta)$ for every triangle $\widetilde \Delta$. Appealing to Proposition~\ref{prop:CAT(0)}, it follows that we may extend $\widetilde f$ over every triangle by an isometry, and hence $\widetilde f$ is a $\pi_1(S)$--equivariant isometry.  This descends to an isometry $f \colon (S,\varphi_1) \to (S,\varphi_2)$.

All that remains is to prove that $f$ is isotopic to the identity.  However, the action of $\pi_1(S)$ on $S$ is independent of the metric $\varphi_i$.  In particular, $\pi_1(S)$--equivariance implies that the continuous extension of $\widetilde f$ to $S^1_\infty$ is the identity.  It follows that $f$ induces the identity on $\pi_1(S)$, and hence is isotopic to the identity.
\end{proof}

The main theorem from the introduction is now an easy corollary of this.
\begin{theoremmain}
If $\varphi_1,\varphi_2 \in \Flat(S)$ and $\Lambda(\varphi_1) = \Lambda(\varphi_2)$, then $\varphi_1 = \varphi_2$.
\end{theoremmain}

\begin{proof}
If $\Lambda(\varphi_1) = \Lambda(\varphi_2)$, then by Proposition~\ref{P:measure is length} and Theorem~\ref{theorem:Otal}, it follows that $L_{\varphi_1} = L_{\varphi_2}$.  Theorem~\ref{T:main2} completes the proof.
\end{proof}

\section{Open questions} \label{sec:questions}

If $\varphi \in \Flat(S)$ is any metric, we can scale by $a > 0$, and the Liouville current will scale the same: $L_{a\varphi} = aL_{\varphi}$.  In particular $\supp(L_{\varphi}) = \supp(L_{a\varphi})$.  If $\varphi$ is defined by a holomorphic quadratic differential, then for any $A \in SL_2(\mathbb R)$, one can deform the quadratic differential, and hence the metric, by $A$.  If we let $\varphi_A$ be such a deformation, then the identity $(S,\varphi) \to (S,\varphi_A)$ is {\em affine}:  there are locally isometric coordinates for $\varphi$ and $\varphi_A$ so that in these coordinates, the identity is affine (with derivative $A$).  The formula for the Liouville current in \cite{rafi:LD} shows that $L_\varphi$ and $L_{\varphi_A}$ will also have the same support.  The scaling deformation above is a special case which can be carried out for any $\varphi \in \Flat(S)$, but if $A$ is not simply a homothety, then this kind of deformation is special to those metrics defined by quadratic differentials.  This is because an eigenspace of the derivative of the affine map must be parallel on $(S,\varphi)$, and hence the holonomy must lie in $\{\pm I\}$.  We conjecture that this is the only way that two metrics in $\Flat(S)$ can have Liouville currents with the same support.

\begin{conjecture}  Given $\varphi_1,\varphi_2 \in \Flat(S)$, if $\supp(L_{\varphi_1}) = \supp(L_{\varphi_2})$ then there exists an affine map $f \colon (S,\varphi_1) \to (S,\varphi_2)$, isotopic to the identity.
\end{conjecture}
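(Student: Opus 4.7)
The plan is to adapt the proof of Theorem~\ref{T:main2}, replacing the metric-rigidity mechanism (which needs the full Liouville measure) with an affine-rigidity mechanism that uses only the support. The first part is unchanged: Lemma~\ref{L:Intrinsic cone points} shows the equivalence relation $\sim$ on $\Ch(L,\Omega)$ depends only on the topology of $\supp(L)$, so it gives a $\pi_1(S)$--equivariant bijection $F\colon \cone(\varphi_1) \to \cone(\varphi_2)$, and Lemma~\ref{L:periodic aperiodic} applied to both metrics shows that the cone angles at $\zeta$ and $F(\zeta)$ are simultaneously rational or irrational multiples of $\pi$ (with matching values in the rational case, determined by $k({\bf x})$ and $n({\bf x})$).

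Next I would refine $F$ to a combinatorial direction correspondence. For each $\zeta \in \cone(\varphi_1)$, an element of $\G^1(\varphi_1,\zeta)$ is recovered from $\supp(L)$ as a consecutive pair $\{x_i,x_{i+1}\}$ in some chain with $\partial^\#_{\varphi_1}({\bf x}) = \zeta$, and a ray at $\zeta$ is recovered as a shift class of such pairs sharing a common endpoint. Passing to the $\varphi_2$ interpretation of the same chains gives a canonical cyclic bijection of the rays at $\zeta$ with the rays at $F(\zeta)$ that preserves the ``angle $\pi$'' pairing. Saddle connections, being elements of $\G^2_{\varphi_i}(\widetilde S) \subset \G^*_{\varphi_i}(\widetilde S) \setminus \G^\circ_{\varphi_i}(\widetilde S)$, are recognizable from the topology of $\supp(L)$ alone (they are the endpoints in the support that are limits of chain pairs on two distinct sides), so the combinatorial saddle-connection triangulation $\mathcal T_1$ of $(S,\varphi_1)$ with vertices $\cone(\varphi_1)$ transports via $F$ to a candidate saddle-connection triangulation of $(S,\varphi_2)$.

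I would then build $\widetilde f\colon (\widetilde S,\varphi_1) \to (\widetilde S,\varphi_2)$ by setting $\widetilde f|_{\cone(\varphi_1)} = F$, sending each lifted edge of $\widetilde{\mathcal T}_1$ affinely onto the corresponding edge in $\widetilde{\mathcal T}_2$ (with linear factor chosen to equal the ratio of Euclidean lengths), and extending over each triangle as an affine map of Euclidean triangles. The scheme is $\pi_1(S)$--equivariant because $F$ and the combinatorial data are. Unlike in Theorem~\ref{T:main2}, the map along edges is in general not an isometry, only affine; that is exactly what the conjecture allows.

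The main obstacle, and I expect the real content of the conjecture, is showing that the piecewise-affine $\widetilde f$ has a \emph{globally consistent} linear part in the sense of the definition of ``affine'': across any nonsingular edge shared by two triangles, the two affine pieces must glue into an affine map on the neighborhood. At cone points the direction correspondence from the second paragraph pins down the linear action on the tangent cone up to the holonomy ambiguity, but along a nonsingular edge the matching must be verified using the asymptotic behavior of nonsingular geodesics in $\G^\circ_{\varphi_i}(\widetilde S)$. The idea would be to exploit that a nonsingular geodesic $\delta$ crossing the edge corresponds to the same point of $\supp(L)$ in both metrics, so the direction of $\widetilde f(\delta)$ is determined intrinsically; if the two affine pieces disagreed, one could find a nonsingular geodesic whose image would fail to be a nonsingular $\varphi_2$--geodesic, contradicting that its endpoints lie in $\supp(L_{\varphi_2}) = \G^*_{\varphi_2}(\widetilde S)$. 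Carrying this out should force the linear part of $\widetilde f$ to be locally constant, hence (by $\pi_1(S)$--equivariance and connectivity) globally constant up to the holonomy of $\varphi_1$; this is precisely the obstruction identified in the paper, and explains why the conclusion is ``affine'' rather than ``isometric'' in general, with a genuine $SL_2(\R)$ family only when the holonomy lies in $\{\pm I\}$.
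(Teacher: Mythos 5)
This statement is not proved in the paper: it is stated as an open conjecture, and the authors explicitly say that even the special case of quadratic differential metrics ``would be an interesting test case \dots but we have not carried out this analysis.'' So there is no proof of record to compare against, and your proposal would need to stand entirely on its own. It does not: as you yourself concede, the ``main obstacle'' --- showing that the piecewise-affine map has a globally consistent linear part --- is exactly the content of the conjecture, and your treatment of it is a plan (``the idea would be\dots,'' ``carrying this out should force\dots'') rather than an argument. The gap is concrete: what the support gives you at a cone point $\zeta$ is a bijection between a dense (or finite) family of directions at $\zeta$ and a corresponding family at $F(\zeta)$, i.e.\ a monotone circle bijection between links. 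For $\widetilde f$ to be affine you need this bijection to be induced by a single element of $GL_2(\mathbb R)$, which is a far stronger, projective-linearity condition that nothing in your outline establishes. Your proposed mechanism --- that a nonsingular $\varphi_1$--geodesic crossing an edge determines the direction of its $\varphi_2$--image ``intrinsically'' --- only determines the \emph{endpoints at infinity} of the image geodesic via $\supp(L)$; converting that asymptotic data into a statement about tangent directions at a point of the edge is precisely the kind of local-from-global step that the known rigidity proofs obtain from the full measure $L$ (via Proposition~\ref{P:measure is distance} and the area comparison of Proposition~\ref{prop:CAT(0)}), and which is unavailable from the support alone.

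There are also unproved intermediate claims. That saddle connections are ``recognizable from the topology of $\supp(L)$ alone'' is plausible but asserted, not shown. More seriously, even granting the cone-point bijection $F$ and a combinatorial correspondence of saddle connections, you need the transported triangulation to be realized by $\varphi_2$--geodesic triangles with no cone points in their interiors and with a coherent affine structure; nothing guarantees that the $\varphi_2$--geodesic representatives of the transported edges bound embedded Euclidean triangles in the combinatorial pattern of $\mathcal T_1$. In the proof of Theorem~\ref{T:main2} this is handled by the equality of edge lengths (from the measure) together with the area rigidity of Proposition~\ref{prop:CAT(0)}; with only the support you have neither lengths nor areas. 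In short, your outline correctly reproduces the portion of the argument the authors themselves say survives (the $\pi_1(S)$--equivariant cone-point bijection and angle preservation), and correctly locates the open problem, but it does not solve it.
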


The proof of the Main Theorem shows that if the supports are the same, then there is a map isotopic to the identity sending cone points bijectively to cone points.  It is also straight forward to see this map actually preserves cone angles.  With a little more work, one can show that the metrics coming from quadratic differentials can be distinguished from all other metrics in $\Flat(S)$ in terms of the supports of their Liouville currents.  The work in \cite{rafi:LD} can then be used to prove the conjecture for quadratic differential metrics.  Since metrics coming from holomorphic $q$--differentials, $q \in \mathbb Z$, can also be distinguished from other flat metrics, this would be an interesting test case for the conjecture, but we have not carried out this analysis.

We also ask the following question which would provide a common generalization of the Main Theoem, as well as the marked length results in \cite{otal:length,croke,Croke:TM,hersonsky:OT,rafi:LD}.

\begin{question}  Are the nonpositively curved cone metrics spectrally rigid?
\end{question}

It seems likely that some combination of the techniques here and in \cite{Croke:TM} may be sufficient, but we have not investigated this.

  \bibliographystyle{alpha}
  \bibliography{main}

\bigskip

\noindent
{\tt Department of Mathematics, Boston College, Chestnut Hill, MA 02467, U.S.A.; https://www2.bc.edu/anja-bankovic/}\\  E-mail address$\colon$ {\tt anja.bankovic@bc.edu}

\bigskip

\noindent
{\tt  Department of Mathematics, University of Illinois at
  Urbana, IL 61801, U.S.A.;  http://www.math.uiuc.edu/\~{}clein/}\\ E-mail address$\colon$ {\tt clein@math.uiuc.edu}

\end{document}